\theoremstyle{plain}
\newtheorem{theorem}{Theorem}
\newtheorem{proposition}[theorem]{Proposition}
\newtheorem{lemma}[theorem]{Lemma}
\theoremstyle{definition}
\newtheorem{definition}[theorem]{Definition}
\newtheorem{remark}[theorem]{Remark}
\newtheorem{remarks}[theorem]{Remarks}
\newtheorem{question}[theorem]{Question}
\newtheorem{problem}[theorem]{Problem}
\newcommand{\ve}{\varepsilon}
\newcommand{\bi}{{\mathbf i}}
\renewcommand{\H}{\mathbb{H}^1}
\newcommand{\hn}{\mathbb{H}^n}
\newcommand{\cB}{{\mathcal B}}
\newcommand{\cF}{{\mathcal F}}
\newcommand{\cH}{{\mathcal H}}
\newcommand{\cL}{{\mathcal L}}
\newcommand{\cS}{{\mathcal S}}
\newcommand{\field}[1]{\mathbb{#1}}
\newcommand{\N}{\field{N}}                  
\newcommand{\R}{\field{R}}                  
\newcommand{\C}{\field{C}}                  
\newcommand{\Sph}{\field{S}}
\newcommand{\V}{\field{V}}                  
\newcommand{\W}{\field{W}}                  
\newcommand{\ra}{\rightarrow}
\newcommand{\stm}{\setminus}
\newcommand{\deriv}[1]{\displaystyle{\frac{\partial}{\partial#1}}}
\DeclareMathOperator{\tanm}{Tan}
\DeclareMathOperator{\supp}{supp}
\DeclareMathOperator{\diam}{diam}
\newcommand{\restrict}{\begin{picture}(12,12)
                        \put(2,0){\line(1,0){8}}
                        \put(2,0){\line(0,1){8}}
                       \end{picture}}
\let\c@equation\c@theorem       
\numberwithin{theorem}{section}
\title{Marstrand's density theorem in the Heisenberg group}
\author{Vasilis Chousionis}
\address{Department of Mathematics and Statistics \\ University of Helsinki \\ P. O. Box 68 \\ FI-00014, Finland}
\email{vasileios.chousionis@helsinki.fi}
\author{Jeremy T. Tyson}
\address{Department of Mathematics \\ University of Illinois \\ 1409
  West Green St. \\ Urbana, IL, 61801}
\email{tyson@illinois.edu}
\date{\today}
\thanks{VC funded by the Academy of Finland Grant SA 267047. JTT supported by NSF grant DMS-1201875.}
\keywords{Heisenberg group, uniform measure, uniformly distributed measure, Hausdorff measure, Marstrand's theorem, analytic variety}
\begin{document}
\bibliographystyle{acm}

\begin{abstract}
We prove that if $\mu$ is a Radon measure on the Heisenberg group $\hn$ such that the density $\Theta^s(\mu,\cdot)$, computed with respect to the Kor\'anyi metric $d_H$, exists and is positive and finite on a set of positive $\mu$ measure, then $s$ is an integer. The proof relies on an analysis of uniformly distributed measures on $(\hn,d_H)$. We provide a number of examples of such measures, illustrating both the similarities and the striking differences of this sub-Riemannian setting from its Euclidean counterpart.
\end{abstract}

\maketitle

\section{Introduction and Notation}\label{sec:intro}

Let $\mu$ be a Radon measure on a metric space $(X,d)$. For $0\leq s < \infty$ the upper and lower $s$-densities of $\mu$ at $x \in X$ are defined respectively by
$$\Theta^{\ast s}(\mu,x)=\limsup_{r \ra 0} \frac{\mu(B(x,r))}{r^s} \qquad \text{ and } \qquad \Theta_{\ast}^{s}(\mu,x)=\liminf_{r \ra 0} \frac{\mu(B(x,r))}{r^s}.$$
In the case when $\Theta^{\ast s}(\mu,x)=\Theta_{\ast}^{s}(\mu,x)$, their common value is called the $s$-density of $\mu$ at $x$ and is denoted by $\Theta^s(\mu,x)$. Recall that in the case of $\R^n$ equipped with the usual Euclidean metric, the Lebesgue density theorem \cite[Corollary 2.14]{mat:geometry} asserts that whenever $A$ is  $\cL^n$-measurable, then $\Theta^n( \cL^n \restrict A, x)=2^n$ for $\cL^n$-a.e.\ $x \in A$ and $\Theta^n( \cL^n \restrict A, x)=0$ for $\cL^n$-a.e.\ $x \in \R^n \stm A$. Here  $\cL^n \restrict A$ denotes the restriction of the $n$-dimensional Lebesgue measure on $A$. Similar, but much weaker, results hold for Hausdorff and packing measures, see \cite[Chapter 6]{mat:geometry} and \cite{at:notes}.

Densities, and their connections to their underlying measures, have been studied extensively in the context of geometric measure theory  since the pioneering work of Besicovitch \cite{bes:fund} in the 1940's. One of the fundamental questions in this line of research is the following: \textit{assuming that $\mu$ is a Radon measure such that $\Theta^s(\mu,x)$ exists for $\mu$-a.e.\ $x$ what can be said about the properties of the measure $\mu$?} A major contribution due to Marstrand \cite{mardens} asserts that, in the Euclidean setting, if the $s$-density exists $\mu$-a.e then $s$ is an integer. In his seminal paper \cite{preiss:rectifiability}, Preiss showed that if the $m$-density of a Radon measure $\mu$ in $\R^n$, $m \in [0,n]$, exists $\mu$-a.e. then the measure $\mu$ is rectifiable, that is, there exist countably many $m$-dimensional Lipschitz graphs $M_i$ such that $\mu (\R^n \stm \cup_i M_i)=0$ and $\mu$ is absolutely continuous with respect to the $m$-dimensional Hausdorff measure $\cH^m$. For an informative and highly readable presentation of the theorem of Preiss we refer the reader to the monograph by De Lellis \cite{delellis}.

Obtaining analogues of the Marstrand and Preiss theorems for other metric spaces remains an interesting and highly non-trivial problem, see e.g.\ \cite[p.\ 112]{delellis}. Lorent \cite{lor1}, \cite{lor3}, \cite{lor2} considered metrics defined by polytope norms on finite-dimensional vector spaces. Our main goal in this paper is to prove Marstrand's theorem for the Heisenberg group equipped with a metric of sub-Riemannian type.

We now state the basic facts about the Heisenberg group needed in this paper. For an extensive treatment of the Heisenberg group from a variety of perspectives see e.g.\ \cite{mont:tour} or \cite{cdpt:survey}. The Heisenberg group $\hn$, identified with $\R^{2n+1}$, is a non-abelian Lie group whose group operation is given by
$$
x\cdot y=(x_1+y_1,\dots,x_{2n}+y_{2n},x_{2n+1}+y_{2n+1}+A(x',y')),
$$
where $x=(x',x_{2n+1})=(x_1,\ldots,x_{2n},x_{2n+1})$ and $y=(y',y_{2n+1})=(y_1,\ldots,y_{2n},y_{2n+1})$ and $A$ denotes the symplectic form on $\R^{2n}$ given by
\begin{equation}\label{A}
A(x',y')=-2\sum_{j=1}^n(x_jy_{j+n}-x_{j+n}y_j).
\end{equation}
For any $q \in \hn$ and $r >0$, let $\tau_q:\hn \ra \hn$ be the left translation
$$\tau_q(p)=q\cdot p$$
and let $\delta_r:\hn \ra \hn$ be the dilation
$$\delta_{r}(p)=(rp_1,\dots,rp_{2n},r^2p_{2n+1}).$$
These dilations are group homomorphisms. We denote by $e=(0,0) \in \R^{2n}\times\R$ the neutral element of $\hn$.

The Kor\'anyi metric $d_H$ on $\hn$ is defined by
$$
d_H(x,y)=\|x^{-1}\cdot y\|
$$
where
$$\|x\|=(\|x'\|^4_{\R^{2n}}+x^2_{2n+1})^{\frac{1}{4}}.$$
The metric is left invariant, that is $d_H(z\cdot x,z\cdot y)=d_H(x,y)$ for all $x,y,z\in\hn$, and the dilations satisfy
$d_H(\delta_r(x),\delta_r(y))=rd_H(x,y)$ for all $x,y\in\hn$ and $r>0$. The closed and open balls with respect to $d_H$ will
be denoted by $B(x,r)$ and $U(x,r)$ respectively. The $d_H$-diameter of a set $S \subset \hn$ will be denoted $\diam_H S$. Finally, the Euclidean metric on $\hn$ will be denoted by $d_E$.

We denote by $\mathcal{H}_H^s,s\geq 0,$ the $s$-dimensional Hausdorff measure obtained from the metric $d_H$, i.e. for $E \subset \hn$ and $\delta >0$, $\mathcal{H}_H^s (E)=\sup_{\delta>0} \mathcal{H}^s_{H,\delta} (E)$, where
$$\mathcal{H}^s_{H,\delta}(E)=\inf \left\{\sum_i  \diam_H(E_i)^s: E \subset \bigcup_i E_i,\,\diam_H (E_i)<\delta \right\}.$$
In the same manner the $s$-dimensional spherical Hausdorff measure for $E \subset \hn$ is defined as $\cS^s_H (E)=\sup_{\delta>0} \cS^s_{H,\delta} (E)$, where
$$\cS^s_{H,\delta}(E)=\inf \left\{\sum_i  r^s_i: E \subset \bigcup_i B(p_i,r_i),\,r_i \leq \delta,\,p_i \in \hn \right\}.$$
Translation invariance and dilation homogeneity of the Hausdorff measures follow as usual, therefore for $A \subset \hn, \ p \in \hn$, $s\ge 0$ and $r>0$,
$$\mathcal{H}^s_H (\tau_p(A))=\mathcal{H}^s_H(A) \ \text{and} \ \mathcal{H}^s_H (\delta_r(A))=r^s \mathcal{H}^s_H(A)$$  and the same relations hold for the spherical Hausdorff measures as well.  We will denote by $\dim_H(A)$ the Hausdorff dimension of a set $A \subset \hn$ with respect to the metric $d_H$, and by $\dim_E(A)$ the Hausdorff dimension with respect to the Euclidean metric in $\hn$. It is well known that the Hausdorff dimension of the metric space $(\hn,d_H)$ is equal to $Q=2n+2$.

Our main result reads as follows.

\begin{theorem}\label{marstrand}
Let $s>0$ and suppose that there exists a Radon measure $\mu$ on $(\hn,d_H)$ such that the density $\Theta^s(\mu,\cdot)$ exists and is positive and finite in a set of positive $\mu$ measure. Then $s$ is an integer.
\end{theorem}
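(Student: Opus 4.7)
My approach follows the classical Marstrand strategy, reducing the theorem to a statement about uniformly distributed measures and then adapting the analyticity argument of Kirchheim and Preiss to the Kor\'anyi norm. Let $E$ denote the set of positive $\mu$-measure on which $\Theta^s(\mu,\cdot)$ exists and is positive and finite. At $\mu$-a.e.\ point $x_0 \in E$, I would form the blow-up sequence $\mu_i := r_i^{-s} (\delta_{1/r_i})_* (\tau_{x_0^{-1}})_* \mu$ for a suitable $r_i \downarrow 0$ and extract a non-zero weak-$*$ limit $\nu$. Because $d_H$ is left invariant and scales as $d_H(\delta_r x,\delta_r y) = r\, d_H(x,y)$, the Preiss--Mattila tangent-measure machinery carries over to $(\hn,d_H)$ and supplies, at $\mu$-a.e.\ $x_0 \in E$, a tangent measure $\nu$ satisfying the strong uniform distribution property
\[
\nu(B(y,r)) = \Theta^s(\mu,x_0)\, r^s \qquad \text{for every } y \in \supp(\nu) \text{ and every } r > 0.
\]
The theorem thereby reduces to the assertion that whenever a non-zero Radon measure $\nu$ on $(\hn,d_H)$ and a constant $c>0$ satisfy $\nu(B(y,r)) = c r^s$ for all $y \in \supp(\nu)$ and all $r > 0$, the exponent $s$ must be an integer.

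To prove this reduced statement, introduce the moment function
\[
F(x,t) := \int_{\hn} e^{-t\,\|x^{-1}\cdot y\|^4}\, d\nu(y), \qquad t > 0.
\]
Two features make $F$ tractable. First, one has the polynomial identity
\[
\|x^{-1}\cdot y\|^4 = \|x' - y'\|_{\R^{2n}}^4 + \bigl(x_{2n+1} - y_{2n+1} - A(x',y')\bigr)^2,
\]
so that $\|x^{-1}\cdot y\|^4$ is a polynomial of degree four in the coordinates of $x$ and $y$. Together with the polynomial growth $\nu(B(y,r)) = cr^s$, this implies that $x \mapsto F(x,t)$ is real-analytic on $\hn$ for each fixed $t>0$. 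Second, the layer-cake formula combined with the exact equality $\nu(B(x,r)) = cr^s$ for $x \in \supp(\nu)$ gives
\[
F(x,t) = cs \int_0^{\infty} e^{-tr^4}\, r^{s-1}\, dr = \tfrac{cs}{4}\,\Gamma(s/4)\, t^{-s/4}.
\]
Hence, for each $t > 0$, the real-analytic function $x \mapsto F(x,t) - \tfrac{cs}{4}\Gamma(s/4)\, t^{-s/4}$ vanishes identically on $\supp(\nu)$.

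From here the task is to extract the integer constraint on $s$. Differentiating the identity $F(x,t) = \tfrac{cs}{4}\Gamma(s/4) t^{-s/4}$ in $t$ and evaluating on $\supp(\nu)$ produces a nested family of real-analytic identities relating integrals of the form $(-1)^k \int \|x^{-1}\cdot y\|^{4k} e^{-t\|x^{-1}\cdot y\|^4}\, d\nu(y)$ to specific powers of $t$. Following the Kirchheim--Preiss strategy, the analyticity in $x$ together with the prescribed factor $t^{-s/4}$ must be compatible with the polynomial-in-$x$ structure of the moments $\int \|x^{-1}\cdot y\|^{4k}\, d\nu(y)$, and this compatibility fails unless $s$ is an integer. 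This last step is what I expect to be the main obstacle. The Euclidean argument leans on the isotropy of dilations and on the clean polynomial identities for $|x-y|^{2k}$; in $\hn$ the Kor\'anyi norm includes the noncommutative correction $A(x',y')$, and the dilations $\delta_r$ scale the vertical coordinate $x_{2n+1}$ as $r^2$ instead of $r$, so one must carefully track how the anisotropic grading of the moment polynomials interacts with the required power of $t$. Handling this algebraic bookkeeping, together with the companion analysis of the real-analytic variety in which $\supp(\nu)$ must lie, is where the bulk of the work will concentrate.
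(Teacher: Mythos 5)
Your first half is essentially the paper's argument. The reduction to an $s$-uniform blow-up (tangent) measure $\nu$ at a generic point of positive finite density is exactly what the paper does --- it carries out the blow-up by hand using a Vitali-relation version of the Lebesgue differentiation theorem, and notes in a remark that the Preiss--Mattila tangent-measure formalism works equally well --- and your moment function $F(x,t)=\int e^{-t\|x^{-1}\cdot y\|^4}\,d\nu(y)$, together with the observation that $\|x^{-1}\cdot y\|^4$ is a polynomial in the coordinates of $x$ and $y$ so that $F(\cdot,t)$ extends holomorphically to $\C^{2n+1}$, is precisely the mechanism of the paper's proposition on uniformly distributed measures. The layer-cake identity $F(x,t)=\tfrac{cs}{4}\Gamma(s/4)\,t^{-s/4}$ on $\supp(\nu)$ is also correct.

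The gap is in your final step, and it is the decisive one. You propose to force $s\in\Z$ through an algebraic incompatibility between the factor $t^{-s/4}$ and the polynomial-in-$x$ moments $\int\|x^{-1}\cdot y\|^{4k}\,d\nu(y)$. This cannot work as stated: an $s$-uniform measure with $s>0$ has unbounded support, so all of these moments diverge and the expansion of $F$ in powers of $t$ is unavailable (the paper performs such an expansion only for measures with bounded support, in a separate result identifying those supports as algebraic varieties); and even formally, $t\mapsto t^{-s/4}$ is real analytic on $(0,\infty)$ for every real $s$, so nothing in the $t$-dependence alone distinguishes integer from non-integer exponents. The integrality is extracted geometrically, not by coefficient matching. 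One shows that $x\in\supp(\nu)$ if and only if $F(x,t)=F(x_0,t)$ for all $t>0$ --- the nontrivial direction being that for $x\notin\supp(\nu)$ one has $F(x,t)\ne F(x_0,t)$ for all large $t$, proved by a dyadic annulus estimate resting on the doubling-type bound $\nu(B(x,r))\le(5r/\rho)^{Q}f_\nu(\rho)$ valid for uniformly distributed measures. Hence $\supp(\nu)$ is a real analytic variety; by Lojasiewicz's structure theorem it is a countable union of smooth submanifolds; and one then invokes the Heisenberg-specific fact (Gromov, Magnani) that a $k$-dimensional $C^{1,1}$ submanifold of $(\hn,d_H)$ has Hausdorff dimension $k$ or $k+1$ --- an integer, even though it differs in general from the topological dimension. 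Since $\dim_H\supp(\nu)=s$ for an $s$-uniform measure, $s$ is an integer. What you call the ``companion analysis of the real-analytic variety'' is in fact the entire content of the last step, and the input on Hausdorff dimensions of submanifolds with respect to $d_H$, which your proposal never identifies, is precisely where the sub-Riemannian geometry must be confronted.
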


Our proof does not follow the scheme of Marstrand's original proof. Instead we adopt and modify accordingly an argument due to Kirchheim and Preiss, who provided a different proof of Marstrand's theorem in \cite{kp:uniformly-distributed}. It is unknown to us if Marstrand's original proof---which has a strong Euclidean flavor---could be applied in the setting of the Heisenberg group. The proof of Kirchheim and Preiss relies on the geometric analysis of uniformly distributed measures. Such an analysis is of independent interest. Let us recall the definition.

\begin{definition}\label{uniform}
A Radon measure $\mu$ in $\hn$ is called uniformly distributed if
$$
\mu(B(x,r))=\mu(B(y,r))
$$
for all $x,y \in \supp(\mu)$ and $r>0$.
\end{definition}

A particular class of uniformly distributed measures are the $s$-uniform measures.

\begin{definition}\label{uniformmeas}
Given $s > 0$, a Radon measure $\mu$ in $\hn$ is called $s$-uniform if there exists some positive constant $c$ such that
$$
\mu(B(x,r))=c \, r^s
$$
for all $x \in \supp(\mu)$ and $r>0$.
\end{definition}

As in the proof of Kirchheim and Preiss an essential ingredient in the proof of Theorem \ref{marstrand} is the fact that the support of any uniformly distributed measure in $\hn$ is a real analytic variety in $\R^{2n+1}$. We show that at $\mu$-a.e.\ point where the $s$-density exists, there exist weak limits of blow-ups of $\mu$ which are $s$-uniform. In particular, these measures are uniformly distributed, hence their supports are real analytic varieties with Hausdorff dimension $s$. Using Lojasiewicz's structure theorem on analytic varieties and the fact that smooth submanifolds in $\hn$ have integer Hausdorff dimension (see Section \ref{sec:manif} for details), we conclude that $s$ is an integer.

In Section \ref{sec:examples} we discuss uniform and uniformly distributed measures in $\hn$ providing also several examples. The classification of uniform and uniformly distributed measures in $\R^n$ is a very difficult and largely unresolved problem. Marstrand's density theorem implies that there are no $s$-uniform measures for $s \notin \N$. Preiss in \cite{preiss:rectifiability} showed that for $m=1,2$, any $m$-uniform measure is $m$-flat, which means that it is of the form $c\, \cH^m \restrict V$, where $c$ is a positive constant and $V$ is an $m$-dimensional subspace. In the remarkable paper \cite{kp:besicovitch}, Kowalski and Preiss proved
that $\cH^3 \restrict C$ is $3$-uniform, where
$$C=\{x  \in \R^4:x_1^2+x_2^2+x_3^2=x_4^2\}$$
is the light cone in $\R^4$. Moreover they showed that every $(n-1)$-uniform measure in $\R^n$ is either $(n-1)$-flat or is a constant multiple of $\cH^{n-1}$ on some isometric copy of $C \times \R^{n-4}$. The classification of $m$-uniform measures in $\R^n$ remains open for $m \neq 1,2,n-1$. Recently Tolsa in \cite{tolsa-uniform} showed that $m$-uniform measures are uniformly $m$-rectifiable for any $m \leq n$ in $\R^n$. Uniform measures were also an essential tool in obtaining a new characterization of uniform rectifiability in \cite{cglt}. A characterization of uniformly distributed measures exists only for $\R$; this is due to Kirchheim and Preiss \cite{kp:uniformly-distributed}. In a future paper, we intend to return to the study of uniform and uniformly distributed measures and the Kowalski--Preiss theorem in $\hn$.

We emphasize that Marstrand's density problem is highly dependent on the metric and potentially sensitive to bi-Lipschitz deformation. It is an interesting open problem whether Theorem \ref{marstrand} holds for other metrics of sub-Riemannian type on $\hn$, or for metrics on more general Carnot groups. Among the features of the Kor\'anyi metric $d_H$ which enable the Kirchheim--Preiss argument to be transferred to this setting is the following analyticity criterion: {\it there exists a strictly decreasing real analytic function $g:[0,\infty)\to\R$ so that $g(t)$ decays to zero exponentially as $t\to\infty$ and $g\circ d_H$ is real analytic on $\R^{2n+1}\times\R^{2n+1}$}. (In the proof of Theorem \ref{marstrand} we use $g(t)=e^{-t^4}$.) We also employ Proposition \ref{manifold} on the integrality of the Hausdorff dimensions of smooth submanifolds of $\hn$ and the local uniformity of the spherical Hausdorff measure at transverse points. The latter results have been generalized to other Carnot groups, sometimes only for submanifolds of a specific type or of a specific dimension. The validity of such blowup estimates and related integral formulas for the spherical Hausdorff measure, for arbitrary submanifolds in arbitrary Carnot groups, remains a challenging problem of ongoing interest. This general program has been intensively investigated by Magnani and his collaborators, see the references cited in section~\ref{sec:manif}.

As mentioned earlier, only a few analogues of the Marstrand and Preiss theorems in other metric spaces are known. In particular, it is not known for which metrics on a Euclidean space $\R^N$ Marstrand's theorem is valid. The following result in this direction is an easy consequence of Theorem \ref{marstrand}. We equip $\R^{n+1}$ with the metric
\begin{equation}\label{heat-d}
d((x',x_{2n+1}),(y',y_{2n+1})) = (|x'-y'|^4+(x_{2n+1}-y_{2n+1})^2)^{1/4}
\end{equation}
where $x=(x',x_{2n+1})$ and $y=(y',y_{2n+1})$ are in $\R^n\times\R=\R^{n+1}$. Note that $d$ is translation invariant for the usual abelian group law on $\R^{n+1}$. To see that the function $d$ is a metric, it suffices to note that $(\R^{n+1},d)$ is isometric to the subgroup $(\W,d_H)$, where $\W = \{ (x_1,\ldots,x_n,0,\ldots,0,x_{2n+1}) \in \hn \, : \, x_1,\ldots,x_n,x_{2n+1} \in \R\}$. Denote by $B_d(x,r)$ the ball of radius $r$ and center $x$ in $(\R^{n+1},d)$.

\begin{theorem}\label{marstrand-corollary}
Let $\mu$ be a Radon measure on $(\R^{n+1},d)$ such that $\lim_{r\to 0} r^{-s} \mu(B_d(x,r))$ exists and is finite and positive in a set of positive $\mu$ measure. Then $s$ is an integer.
\end{theorem}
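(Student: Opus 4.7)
The plan is to reduce Theorem \ref{marstrand-corollary} directly to Theorem \ref{marstrand} by pushing the measure $\mu$ forward along the isometric embedding of $(\R^{n+1},d)$ into $(\hn,d_H)$ already identified in the excerpt. Write $\iota:\R^{n+1}\to\hn$ for the map
\[
\iota(x_1,\ldots,x_n,x_{2n+1}) = (x_1,\ldots,x_n,0,\ldots,0,x_{2n+1}),
\]
whose image is the closed subgroup $\W$. A direct check using \eqref{A} shows that $A$ vanishes on pairs of vectors whose last $n$ ``horizontal'' coordinates are zero, so $\W$ is abelian and the restriction of $d_H$ to $\W$ reproduces $d$ under $\iota$; hence $\iota$ is a closed isometric embedding.

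Define $\tilde\mu=\iota_\#\mu$, the pushforward measure on $\hn$. Because $\iota$ is a closed isometric embedding and $\mu$ is Radon, $\tilde\mu$ is Radon on $(\hn,d_H)$, with $\supp(\tilde\mu)\subset\W$. The key point is the ball identity: for any $x=\iota(\xi)\in\W$ and any $r>0$, the fact that $\iota$ is an isometry onto $\W$ gives
\[
\iota^{-1}(B(x,r)) = \iota^{-1}(B(x,r)\cap\W) = B_d(\xi,r),
\]
so that
\[
\tilde\mu(B(x,r)) = \mu(B_d(\xi,r)).
\]
Consequently $\Theta^s(\tilde\mu,\iota(\xi))=\lim_{r\to 0}r^{-s}\mu(B_d(\xi,r))$ whenever either limit exists. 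By hypothesis there is a set $E\subset\R^{n+1}$ with $\mu(E)>0$ on which this limit exists and is positive and finite; then $\iota(E)\subset\hn$ satisfies $\tilde\mu(\iota(E))=\mu(E)>0$ and $\Theta^s(\tilde\mu,\cdot)$ exists and is positive and finite on $\iota(E)$.

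Applying Theorem \ref{marstrand} to the Radon measure $\tilde\mu$ on $(\hn,d_H)$ immediately yields that $s\in\N$. The only mild subtlety is verifying that the pushforward of a Radon measure along an isometric embedding onto a closed subset is again Radon, and that the ball identity above really holds; both are routine, since $\W$ is closed in $\hn$ and $\iota$ is a homeomorphism onto $\W$. No genuinely new ingredients are needed beyond the isometric identification $(\R^{n+1},d)\cong(\W,d_H)$ already pointed out in the excerpt.
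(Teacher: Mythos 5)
Your proposal is correct and is essentially the paper's own argument: the paper's measure $\tilde\mu(A)=\mu(A_1)$ with $A_1=\{(x_1,\ldots,x_n,x_{2n+1}):(x_1,\ldots,x_n,0,\ldots,0,x_{2n+1})\in A\}$ is exactly the pushforward $\iota_\#\mu$ along the isometric identification $(\R^{n+1},d)\cong(\W,d_H)$, and both arguments conclude via the ball identity $\tilde\mu(B_H(\tilde x,r))=\mu(B_d(x,r))$ followed by an application of Theorem \ref{marstrand}. No issues.
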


\begin{proof}
Define a measure $\tilde\mu$ on $\hn$ by $\tilde\mu(A) = \mu(A_1)$, where $A_1 = \{(x_1,\ldots,x_n,x_{2n+1}) \, : \, (x_1,\ldots,x_n,0,\ldots,0,x_{2n+1}) \in A \}$. Then $\tilde\mu$ is a Radon measure on $\hn$ and
$$
\tilde\mu(B_H(\tilde{x},r)) = \mu(B_d(x,r))
$$
for $\tilde{x} = (x_1,\ldots,x_n,0,\ldots,0,x_{2n+1}) \in \W$, where $x = (x_1,\ldots,x_n,x_{2n+1})$. The result follows from Theorem \ref{marstrand}.
\end{proof}

\

\paragraph{\bf Acknowledgements.} Thanks are due to Enrico Le Donne, Valentino Magnani and Pertti Mattila for conversations on the subject of this paper and for useful remarks. Research for this paper was carried out during a visit of JTT to the University of Helsinki and Aalto University. The hospitality of these institutions is appreciated.

\section{Differential geometry of submanifolds in the Heisenberg group}\label{sec:manif}

\subsection{Homogeneous subgroups}\label{subsec:homogeneous}

A basic class of uniform measures in the Heisenberg group $\hn$ consists of the natural volume measures on homogeneous subgroups. Recall that a subgroup $S \subset \hn$ is {\it homogeneous} if it is closed under the dilation semigroup, i.e., $\delta_r(x) \in S$ whenever $x \in S$ and $r>0$. Homogeneous subgroups of $\hn$ come in two flavors. A subgroup $\V$ is said to be a {\it horizontal homogeneous subgroup} if $\V = V\times(0) \subset \R^{2n}\times\R$, where $V$ is an isotropic subspace of $\R^{2n}$. (Recall that a subspace $V\subset\R^{2n}$ is said to be {\it isotropic} if the symplectic form $A$ defined in \eqref{A} vanishes on $V$.) The topological dimension of a horizontal homogeneous subgroup can be any value $k \in \{1,\ldots,n\}$. A subgroup $\W$ is said to be a {\it vertical homogeneous subgroup} if $\W = W\times\R \subset \R^{2n} \times \R$, where $W$ is any subspace of $\R^{2n}$. Vertical subgroups can have any topological dimension $k \in \{1,\ldots,2n\}$.

Horizontal and vertical homogeneous subgroups are both linear subspaces of the underlying Euclidean space $\R^{2n+1}$, however, their intrinsic metric structure as subsets of the Heisenberg group $\hn$ are quite diffferent. In particular, denoting by $k'$ the sub-Riemannian dimension of such a subgroup, we note that $k'=k$ for $k$-dimensional horizontal homogeneous subgroups and $k'=k+1$ for $k$-dimensional vertical homogeneous subgroups \cite[\S2.4]{mssc:intrinsic-rectifiability}. The natural volume measure on such a subgroup $\Sigma$ is the standard Lebesgue measure, which agrees up to a constant multiple with the restriction of the spherical Hausdorff measure, $\cS^{k'}_H\restrict\Sigma$, and also with the bi-invariant Haar measure \cite[Proposition 2.32]{mssc:intrinsic-rectifiability}. In connection with this paper the following result is of particular interest.

\begin{proposition}\label{homogeneous-subgroups}
For each homogeneous subgroup $\Sigma \subset \hn$ of sub-Riemannian dimension $k'$, $\cS^{k'}_H\restrict\Sigma$ is a $k'$-uniform measure.
\end{proposition}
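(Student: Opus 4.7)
The plan is to exploit two invariance properties in tandem: the left-translation invariance and dilation homogeneity of $\cS^{k'}_H$, together with the fact that the homogeneous subgroup $\Sigma$ is itself preserved under left-translations by its own elements (since it is a subgroup) and under the dilation semigroup $\{\delta_r\}_{r>0}$ (by the definition of ``homogeneous''). The key structural identity to establish is that, for every $x\in\Sigma$ and $r>0$,
$$
\Sigma \cap B(x,r) \;=\; \tau_x\bigl( \Sigma \cap B(e,r) \bigr) \;=\; \tau_x\circ\delta_r\bigl( \Sigma \cap B(e,1) \bigr).
$$

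For the first equality, I would argue by double inclusion using the left-invariance of $d_H$. If $y\in \Sigma\cap B(x,r)$, set $z=x^{-1}\cdot y$; then $z\in\Sigma$ since $x,y\in\Sigma$ and $\Sigma$ is a subgroup, and $d_H(e,z)=d_H(x,y)\leq r$, so $z\in\Sigma\cap B(e,r)$. The reverse inclusion is equally immediate. For the second equality, I would use that $d_H(\delta_r(p),\delta_r(q))=r\,d_H(p,q)$ gives $B(e,r)=\delta_r(B(e,1))$, and that $\delta_r(\Sigma)=\Sigma$ since $\Sigma$ is homogeneous, so $\Sigma\cap B(e,r)=\delta_r(\Sigma\cap B(e,1))$.

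Applying $\cS^{k'}_H$ to the combined identity and invoking the translation invariance $\cS^{k'}_H(\tau_x A)=\cS^{k'}_H(A)$ and the homogeneity $\cS^{k'}_H(\delta_r A)=r^{k'}\cS^{k'}_H(A)$ recorded in Section \ref{sec:intro}, one obtains
$$
\cS^{k'}_H\bigl(\Sigma\cap B(x,r)\bigr) \;=\; r^{k'}\cdot c, \qquad\text{where }\; c := \cS^{k'}_H\bigl(\Sigma\cap B(e,1)\bigr),
$$
for all $x\in\Sigma$ and $r>0$. This is precisely the $k'$-uniformity asserted in the proposition, once we know that $0<c<\infty$.

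The only point not immediately handed to us by invariance is the nontriviality and local finiteness of the constant $c$. This however is exactly the content of the statement, cited in the paragraph preceding the proposition from \cite{mssc:intrinsic-rectifiability}, that $\cS^{k'}_H\restrict\Sigma$ coincides with a positive constant multiple of the $k'$-dimensional Lebesgue (equivalently, Haar) measure on $\Sigma$. Since $\Sigma\cap B(e,1)$ has nonempty interior inside $\Sigma$ and is bounded in $\R^{2n+1}$, its Lebesgue measure on $\Sigma$ is positive and finite, hence $c\in(0,\infty)$. There is therefore no serious obstacle in this proof; the only mildly subtle ingredient is the proportionality between $\cS^{k'}_H\restrict\Sigma$ and Lebesgue measure on $\Sigma$, and this is imported from the literature rather than proved here.
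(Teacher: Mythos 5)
Your proof is correct and follows essentially the same route as the paper's: the identity $B(x,r)\cap\Sigma = (\tau_x\circ\delta_r)(B(e,1)\cap\Sigma)$ combined with the translation invariance and dilation homogeneity of $\cS^{k'}_H$. Your additional verification that the constant $c=\cS^{k'}_H(B(e,1)\cap\Sigma)$ is positive and finite, via the proportionality with Haar measure cited from \cite{mssc:intrinsic-rectifiability}, is a detail the paper leaves implicit but is a welcome completion.
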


\begin{proof}
Since $\Sigma$ is both homogeneous and a subgroup, $B(x,r) \cap \Sigma = (\tau_x\circ\delta_r)(B(e,1)\cap \Sigma)$ for each $x \in \Sigma$. Thus $\cS^{k'}_H(B(x,r)\cap\Sigma) = cr^{k'}$ where $c=\cS^{k'}_H(B(e,1)\cap\Sigma)$.
\end{proof}

In particular, $\cS^2_H\restrict\Sigma$ is $2$-uniform when $\Sigma$ is the vertical ($x_{2n+1}$-)axis. Note that the vertical axis is {\bf not} an intrinsically rectifiable subset of $\hn$ in the sense of \cite{mssc:intrinsic-rectifiability}. Thus the Preiss rectifiability theorem from \cite{preiss:rectifiability} fails to hold in $(\hn,d_H)$ when rectifiability is understood in the sense of
\cite{mssc:intrinsic-rectifiability}. A related observation was made by Lorent \cite[p.\  454]{lor3}.

\subsection{Geometry of submanifolds}\label{subsec:submanifolds}

The intrinsic geometry of submanifolds and more general subsets in sub-Riemannian spaces was advertised as a research program by Gromov in his pioneering work \cite{gro:cc} and has undergone intensive study since that time. In particular, Magnani has made a detailed analysis of the local structure of submanifolds of Carnot groups from the sub-Riemannian perspective, emphasizing blow-up estimates for volume measures, neglibility of characteristic points and associated area formulas for the spherical Hausdorff measure. This detailed program has been carried out in an ongoing series of papers, \cite{mag1}, \cite{mag2}, \cite{mag3}, \cite{mag4}, \cite{mag5}, \cite{mag6}, \cite{mag7}, \cite{mag8}. In this paper we only need to recall the relevant results in the setting of the Heisenberg group $\hn$ equipped with the Kor\'anyi metric.

Before stating these results we remind the reader that the sub-Riemannian differential geometric structure of $\hn$ derives from the fundamental notion of the {\it horizontal distribution} $H\hn$ which is a completely nonintegrable subbundle of the tangent bundle. The fiber $H_x\hn$, called the {\it horizontal tangent space} of $\hn $ at $x$, is the span of the values at $x$ of the left invariant vector fields
$$
X_j = \deriv{x_j} + 2x_{n+j} \deriv{x_{2n+1}} \quad \mbox{and} \quad X_{n+j} = \deriv{x_{n+j}} - 2x_{j} \deriv{x_{2n+1}}, \qquad \mbox{where $j=1,\ldots,n$.}
$$

\begin{proposition}[Magnani]\label{manifold}
Let $\Sigma$ be a $k$-dimensional $C^{1,1}$ submanifold in $\hn$. Then
\begin{enumerate}
\item $\dim_H (\Sigma) = k' \in \N$, where $k'$ is either $k$ or $k+1$,
\item  the measure $\cS^{k'}_H \restrict M$ is {\em asymptotically $k'$-uniform}, that is,
$$
\lim_{r \ra 0} \frac{\cS^{k'}_H(\Sigma \cap B(x,r))}{r^{k'}}
$$
exists for $\cS^{k'}_H$-a.e.\ $x \in \Sigma$.
\end{enumerate}
\end{proposition}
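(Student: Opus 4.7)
The plan is to classify points of $\Sigma$ according to the position of their classical tangent space $T_x\Sigma$ relative to the horizontal distribution $H\hn$, and then to reduce the local density analysis at generic points to the blow-up limit, which will turn out to be a homogeneous subgroup and hence $k'$-uniform by Proposition \ref{homogeneous-subgroups}.

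First I would isolate the characteristic set $C(\Sigma) \subset \Sigma$ where the horizontal structure degenerates. For a $k$-dimensional submanifold with $k \leq n$, this is the set where $T_x\Sigma$ fails to be simultaneously horizontal and isotropic for the symplectic form $A$; for $k > n$ it is the set where $T_x\Sigma + H_x\hn$ fails to span the full ambient tangent space. A well-known estimate going back to Balogh and refined by Magnani uses the $C^{1,1}$ hypothesis to show that the characteristic set has vanishing $\cS^{k'}_H$-measure for the expected integer $k'$. Away from $C(\Sigma)$ the intrinsic homogeneous dimension is constant and integer valued: equal to $k$ in the horizontal case (possible only when $k \leq n$) and to $k+1$ otherwise. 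This yields assertion (1) once the density computation in (2) identifies $k'$ with the Hausdorff dimension.

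Second, at each non-characteristic point $x$ I would carry out the intrinsic blow-up $\Sigma_r := \delta_{1/r}(\tau_{x^{-1}}(\Sigma))$. Expanding the defining equations of $\Sigma$ in adapted exponential coordinates at $x$ and using $C^{1,1}$ regularity to control the remainder at the order dictated by the parabolic scaling $\delta_r$, one shows that $\Sigma_r$ converges locally in the Hausdorff distance to a homogeneous subgroup $T^H_x\Sigma$ of sub-Riemannian dimension $k'$. Proposition \ref{homogeneous-subgroups} then gives
$$\cS^{k'}_H(T^H_x\Sigma \cap B(e,\rho)) = c_x\, \rho^{k'}$$
for a positive constant $c_x$, and a Vitali-type covering argument transports this scaling identity back to $\Sigma$, producing
$$\lim_{r \to 0} \frac{\cS^{k'}_H(\Sigma \cap B(x,r))}{r^{k'}} = c_x,$$
which is assertion (2).

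The main obstacle is making the blow-up convergence quantitative enough to pass through the spherical Hausdorff measure. This requires two ingredients that form the heart of Magnani's program: an area-type formula representing $\cS^{k'}_H\restrict\Sigma$ as a Euclidean surface integral weighted by a horizontal Jacobian, and the fact that this Jacobian vanishes precisely on $C(\Sigma)$, which together force the characteristic set to be $\cS^{k'}_H$-null. Setting this up carefully in the Kor\'anyi metric --- which is only piecewise smooth, with singularities along the vertical axis --- is the delicate part, and is carried out across the series of Magnani papers cited immediately before the proposition.
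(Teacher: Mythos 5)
First, a framing point: the paper does not actually prove Proposition \ref{manifold}; it is quoted from Magnani's work, and the paragraphs following the statement merely assemble the relevant citations --- blow-up of the surface measure at points of maximal degree, $\cS^{k'}_H$-negligibility of the lower-degree points, and an area formula converting the Riemannian surface measure into $\cS^{k'}_H\restrict\Sigma$. Your sketch reproduces exactly this architecture, so at the level of strategy you are describing the intended proof.

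There is, however, a concrete error in your definition of the characteristic set when $k\le n$, and it is not cosmetic. You declare $C(\Sigma)$ to be the set where $T_x\Sigma$ fails to be horizontal and isotropic, claim it is $\cS^{k'}_H$-null, and conclude that away from it the local homogeneous dimension equals $k$ ``in the horizontal case.'' Test this on the vertical axis in $\H$, or on any $C^{1,1}$ curve in $\H$ whose tangent is transverse to the horizontal planes: here $k=1\le n$, every point is nonhorizontal, so your $C(\Sigma)$ is all of $\Sigma$, the negligibility claim fails, and no points remain at which to blow up --- yet $\dim_H=2=k+1$ for such curves. The correct dichotomy, which the paper states explicitly, is: either $T_x\Sigma\subset H_x\hn$ at \emph{every} point, in which case $k'=k$, the tangent spaces are automatically isotropic (the Legendrian condition), and there is no exceptional set; or $\Sigma$ has at least one nonhorizontal point, in which case $k'=k+1$ and the negligible set is $\{x\in\Sigma:\ T_x\Sigma\subset H_x\hn\}$ --- that is, the \emph{horizontal} points are the degenerate ones, the opposite of what you wrote. (Your description for $k>n$ is the right one, and the same formula in fact covers all nonhorizontal $\Sigma$ regardless of $k$.) A second, smaller slip: the Kor\'anyi gauge $\|x\|=(\|x'\|^4+x_{2n+1}^2)^{1/4}$ is real analytic away from the origin, including along the vertical axis (the paper's Proposition \ref{realanal} depends on this); the metric with singularities on the vertical axis is the Carnot--Carath\'eodory one. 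With these two corrections your outline matches Magnani's proof as cited.
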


Part (1) of Proposition \ref{manifold} follows from the general formula for the Hausdorff dimensions of smooth submanifolds in (equiregular) sub-Riemannian manifolds given in \cite[\S0.6.B]{gro:cc}, or alternatively, as a consequence of part (2). The dimension $k'$ coincides with the {\it degree} $d(\Sigma)$ of $\Sigma$ as defined by Magnani \cite{mag:blowup}, \cite{mag4}. For submanifolds $\Sigma\subset\hn$ we have $k'=k+1$ whenever $n+1\le k\le 2n$, while if $1\le k\le n$ we may have either $k'=k$ or $k'=k+1$. In fact a $k$-dimensional submanifold $\Sigma \subset \hn$ has $k'=k$ if and only if $\Sigma$ is {\it horizontal}, that is, $T_x\Sigma$ is contained in $H_x\hn$ for all $x \in \Sigma$. The case $k'=k+1$ corresponds to {\it nonhorizontal} submanifolds, for which at least one tangent space $T_x\Sigma$ is transverse to the corresponding horizontal tangent space $H_x\hn$. Naturally, the distinction drawn here corresponds precisely to the distinction between horizontal and vertical homogeneous subgroups in subsection \ref{subsec:homogeneous}.

The asymptotic uniformity of $\cS^{k'}_H \restrict M$ holds at points of {\it maximal degree}, i.e., points $x \in \Sigma$ where the local degree $d_\Sigma(x)$ coincides with $d(\Sigma)$. For the definition of $d_\Sigma(x)$ in general Carnot groups, see \cite[p.\ 208]{mag4}. In the present setting the value of $d_\Sigma(x)$ for a $k$-dimensional submanifold $\Sigma$ is simply given by $k$ if $T_x\Sigma\subset H_x\hn$ and by $k+1$ otherwise. In fact, such asymptotic uniformity is stated in \cite[Theorem 1.1]{mag2} or \cite[Theorem 1.2]{mag4} for the volume measure on $\Sigma$ (relative to an auxiliary Riemannian metric) and holds at points of maximal degree. The $\cS^{k'}_H$ negligibility of points of lower degree can be observed in \cite[Corollary 1.2]{mag:blowup}, where it is stated for submanifolds in general Carnot groups of step two, or in \cite[Theorem 2.16]{mag3}. (In the latter reference the negligibility criterion is stated for maximally nonhorizontal submanifolds in general Carnot groups, however, in the Heisenberg group $\hn$ all submanifolds are either horizontal or maximally nonhorizontal, and all points in a horizontal submanifold automatically have maximal degree.) Conversion from the Riemannian volume measure to the spherical Hausdorff measure $\cS^{k'}_H$ is accomplished by means of an area formula relating these two measures.
See \cite[Theorem 1.2]{mag2} or \cite[(1.4)]{mag4}.

\section{Uniformly distributed measures and the proof of Marstrand's theorem}\label{sec:marstrand}

For a uniformly distributed measure $\mu$ in $\hn$ it is easy to see that
\begin{equation}\label{unididou}
\mu(B(x,r)) \leq \left(\frac{5r}{s}\right)^Q f_\mu(s)
\end{equation}
for every $x \in \hn$ and every $0<s<r<\infty$, where $f_\mu :(0,\infty) \ra (0,\infty)$ is defined by $f_\mu (s)=\mu(B(x,s))$ for any $x \in \supp(\mu)$. The proof of \eqref{unididou} is identical to the one of \cite[Lemma 1.1]{kp:uniformly-distributed}.

\begin{proposition}\label{realanal}
 Let $\mu$ be a uniformly distributed measure in $\hn$. Then $\supp(\mu)$ is a real analytic variety in $\R^{2n+1}$ and $\dim_H (\supp(\mu))$ is an integer.
\end{proposition}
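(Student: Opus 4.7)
The plan is to adapt the Kirchheim--Preiss strategy \cite{kp:uniformly-distributed} by exhibiting $\supp(\mu)$ as the common zero set of a family of real analytic functions, and then combining Lojasiewicz's structure theorem with Proposition \ref{manifold}. For each $t>0$ I would set
$$
F_t(x) = \int_{\hn} e^{-t\,d_H(x,y)^4}\,d\mu(y),\qquad x\in\hn.
$$
Unwinding the formula $x^{-1}\cdot y = (y'-x',\, y_{2n+1}-x_{2n+1}-A(x',y'))$ yields
$$
d_H(x,y)^4 = |y'-x'|^4 + \bigl(y_{2n+1}-x_{2n+1}-A(x',y')\bigr)^2,
$$
a polynomial of degree $4$ in $(x,y)$. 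Hence $x\mapsto e^{-t\,d_H(x,y)^4}$ extends holomorphically to a thin complex strip around $\R^{2n+1}$, and on that strip it continues to decay like a Gaussian in $y$. This decay dominates the polynomial volume growth $\mu(B(0,r))\lesssim r^Q$ coming from \eqref{unididou}, so differentiation under the integral (or a Morera argument) shows $F_t$ is real analytic on $\R^{2n+1}$.

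Next, an integration by parts gives
$$
F_t(x) = 4t\int_0^\infty r^3\,e^{-tr^4}\,\mu\bigl(B(x,r)\bigr)\,dr,
$$
so on $\supp(\mu)$ the integrand depends only on the profile $f_\mu$, and $F_t(x)=c(t)$ is independent of $x\in\supp(\mu)$. Each $H_t:=F_t-c(t)$ is therefore a real analytic function on $\R^{2n+1}$ vanishing on $\supp(\mu)$, and the central claim is
$$
\supp(\mu) = \bigcap_{t>0}\{x\in\R^{2n+1} : H_t(x)=0\}.
$$
The nontrivial inclusion $\supseteq$ is the main obstacle. Given $x\notin\supp(\mu)$, fix $r_0>0$ with $\mu(B(x,r_0))=0$; splitting the integral for $F_t(x)$ over the dyadic annuli $\{2^k r_0\le d_H(x,y)<2^{k+1}r_0\}$, $k\ge 0$, and applying \eqref{unididou} to each, one obtains $F_t(x)\le C(r_0)\,e^{-t r_0^4}$ for all large $t$. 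For any reference point $x_0\in\supp(\mu)$,
$$
c(t) = F_t(x_0) \ge e^{-t r_0^4/16}\, f_\mu(r_0/2).
$$
Since $e^{-tr_0^4/16}/e^{-tr_0^4}\to\infty$ as $t\to\infty$, we get $F_t(x)<c(t)$ and thus $H_t(x)\neq 0$ for all sufficiently large $t$.

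The remainder is formal. Since the ring $\mathcal{O}_{x_0}$ of real analytic function germs at any $x_0\in\R^{2n+1}$ is Noetherian, the ideal it generated by the germs of $\{H_t\}_{t>0}$ is finitely generated by some $H_{t_1},\dots,H_{t_m}$; hence locally near $x_0$,
$$
\supp(\mu) = \{H_{t_1}=\cdots=H_{t_m}=0\}
$$
is a real analytic variety. By Lojasiewicz's structure theorem it decomposes locally as a finite union of real analytic submanifolds $M_1,\dots,M_N$ of $\R^{2n+1}$. Each $M_j$ is $C^{1,1}$, so Proposition \ref{manifold}(1) gives $\dim_H(M_j)\in\N$, and therefore
$$
\dim_H(\supp(\mu)) = \max_j \dim_H(M_j) \in \N,
$$
which completes the plan.
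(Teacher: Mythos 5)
Your proposal follows essentially the same route as the paper's proof: the same kernel $e^{-t\,d_H(\cdot,y)^4}$ (whose fourth power is polynomial in the coordinates, giving holomorphic extension and hence real analyticity), the same use of \eqref{unididou} and an annulus decomposition to show that $F_t(x)$ decays strictly faster off the support than on it, and the same concluding appeal to Lojasiewicz's structure theorem together with Proposition \ref{manifold}(1). The only step where you are a bit too quick is the ``formal'' Noetherian argument at the end: finite generation of the ideal of germs yields, for each $t$, a neighborhood \emph{depending on $t$} on which $H_t$ lies in the ideal generated by $H_{t_1},\dots,H_{t_m}$, and since $t$ ranges over an uncountable family one cannot immediately extract a single neighborhood on which $\{H_{t_1}=\cdots=H_{t_m}=0\}$ is contained in $\bigcap_{t>0}\{H_t=0\}$; closing this is precisely the (standard but nontrivial) theorem that an arbitrary intersection of real analytic varieties is a real analytic variety, which the paper invokes by citing Narasimhan rather than reproving.
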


\begin{proof}
If $\supp(\mu)=\hn$ then by \cite[Theorem 3.4]{mat:geometry} $\mu=c \, \cH_H^{2n+2}$, which is the Haar measure in $\hn$, hence trivially $\dim_H(\supp(\mu))=2n+2$. Therefore we can assume that $\supp(\mu) \neq \hn$. Let $x_0 \in \supp(\mu)$ and define
\begin{equation*}
F(x,s)=\int_{\R^{2n+1}} (\exp(-s\|x^{-1}\cdot z\|^4)-\exp(-s\|x_0^{-1}\cdot z\|^4)) \, d\mu(z),
\end{equation*}
for $x \in \hn$ and $s>0$. Using \cite[Theorem 1.15]{mat:geometry} we get,
\begin{equation}
\label{fub}
\begin{split}
\int_{\R^{2n+1}} \exp(-s\|x^{-1}\cdot z\|^4) d \mu (z)&= \int_0^\infty \mu(\{z : \exp(-s\|x^{-1}\cdot z\|^4) \geq t\})dt\\
&= \int_0^1 \mu \left( B\left(x, \sqrt[4]{\frac{-\log t}{s}}\right) \right)dt.
\end{split}
\end{equation}
Therefore,
\begin{equation}
\label{nondep}
\int_{\R^{2n+1}} \exp(-s\|x^{-1}\cdot z\|^4) d \mu (z)=\int_{\R^{2n+1}} \exp(-s\|y^{-1}\cdot z\|^4) d \mu (z)
\end{equation}
for all $x,y \in \supp(\mu)$ and all $s>0$, and hence
the function $F(x,s)$ is well defined as it does not depend on the choice of $x_0$. Using \eqref{fub} and \eqref{unididou} we also deduce that $F(x,s)$  is finite for all $x \in \hn$ and $s>0$, since
$$\int_{e^{-s}}^1 \mu \left( B\left(x, \sqrt[4]{\frac{-\log t}{s}}\right) \right)dt \leq \mu(B(x,1))<\infty$$
and
$$\int_0^{e^{-s}} \mu \left( B\left(x, \sqrt[4]{\frac{-\log t}{s}}\right) \right)dt \leq 5^Q f_\mu(1)\int_s^\infty \left(\frac{u}{s}\right)^{Q/4}e^{-u}du<\infty.$$

We will show that $x \in \supp(\mu)$ if and only if $F(x,s)=0$ for all $s>0$.  By \eqref{nondep} if $x \in \supp(\mu)$ then $F(x,s)=0$ for all $s>0$. Now let $x \notin \supp(\mu)$, it suffices to show that there exists $s>0$ such that $F(x,s) \neq 0$. Let $\ve \in (0,1)$ such that $B(x,\ve) \cap \supp(\mu)=0$. In that case, splitting the integral into annuli $B(x,(k+1)\ve)\setminus B(x,k \ve)$ and using \eqref{unididou} we get
\begin{equation*}
\begin{split}
\int_{\R^{2n+1}} \exp(-s\|x^{-1}\cdot z\|^4) d \mu (z)&\leq \sum_{k=0}^\infty  \exp(-s k^4 \ve^4)\, \mu(B(x, (k+1)\ve)) \\
&\leq 10^Q \, f_\mu(\ve/2) \sum_{k=0}^\infty (k+1)^Q\,\exp(-s k^4 \ve^4).
\end{split}
\end{equation*}
Noting that
\begin{equation*}
\begin{split}
\int_{\R^{2n+1}} \exp(-s\|x_0^{-1}\cdot z\|^4) d \mu (z) &\geq \int_{B(x_0, \ve/2)} \exp(-s\|x_0^{-1}\cdot z\|^4) d \mu (z) \\
&\geq \exp(-s (\ve/2)^4)\,f_\mu(\ve/2),
\end{split}
\end{equation*}
we deduce that
\begin{equation*}
\begin{split}
\lim_{s \ra \infty}& \frac{\int_{\R^{2n+1}} \exp(-s\|x^{-1}\cdot z\|^4) d \mu (z)}{\int_{\R^{2n+1}} \exp(-s\|x_0^{-1}\cdot z\|^4) d \mu (z)} \\
&\quad\quad\quad \leq \lim_{s \ra \infty} 10^Q \, \sum_{k=0}^\infty (k+1)^Q\,\exp(-s k^4 (\ve^4-\ve^4/16))=0.
\end{split}
\end{equation*}
Hence there exists some $s_x \in (0, \infty)$ such that for all $s>s_x$
$$\int_{\R^{2n+1}} \exp(-s\|x^{-1}\cdot z\|^4) d \mu (z)< \frac12 \int_{\R^{2n+1}} \exp(-s\|x_0^{-1}\cdot z\|^4) d \mu (z),$$
that is $F(x,s) \neq 0$ for $s>s_x$. Therefore we have shown that
$$
\supp(\mu)= \bigcap_{s>0} \{x \in \R^{2n+1}: F(x,s)=0\}.
$$
We now fix $s>0$. In order to show that $F(x):=F(x,s)$ is a real analytic function on $\R^{2n+1}$ it suffices to show that
$$F_1(x)=\int_{\R^{2n+1}} \exp(-s\|x^{-1}\cdot z\|^4) d \mu (z)$$
is real analytic. We define $\tilde{F_1}:\C^{2n+1} \ra \C$ for $\tilde{x}=(x_1,\dots,x_{2n+1}) \in \C^{2n+1}$ as
$$
\tilde{F_1}(\tilde{x})=\int_{\R^{2n+1}} \exp \left( -s \Big[ \big( \sum_{i=1}^{2n}(z_i-x_i)^2 \big)^2 +\big( z_{2n+1}-x_{2n+1}+2 \sum_{i=1}^n(x_iz_{i+n}-x_{i+n}z_i)\bigr) \Big]^2 \right) d \mu (z).
$$
It follows that $\tilde{F_1}$ is holomorphic on $\C^{2n+1}$, and as a consequence $F_1= \tilde{F_1}|_{\R^{2n+1}}$ is real analytic in $\R^{2n+1}$. Since the intersection of any family of analytic varieties is an analytic variety, see e.g.\ \cite{narasimhan-analytic}, we deduce that $\supp(\mu)$ is a real analytic variety in $\R^{2n+1}$.

It remains to show that $\dim_H(\supp(\mu))$ is an integer. We have shown  that the support of  $\mu$ in $(\hn,d_H)$ is an $m$-dimensional analytic variety of $\R^{2n+1}$ for some $m \in \N$. According to Lojasiewicz's Structure Theorem for real analytic varieties (see for example section 6.3 in \cite{kp:primer}), $\supp(\mu)$ can be written as the union of countably many analytic submanifolds of $\R^{2n+1}$ whose dimensions vary between $0$ and $m$. An application of part (1) of Proposition \ref{manifold} finishes the proof.
\end{proof}

Let $\mu$ be a Radon measure on a metric space $(X,d)$. A family of closed balls $\cF$ is said to be a {\it Vitali relation} for a set $S \subset X$ if for every $A \subset S$ there exists a disjoint, countable family of balls $\{B_i\}_{i \in I} \subset \cF$ such that
$$
\mu (A \setminus \cup_{i \in I}B_i)=0.
$$

\begin{theorem}
\label{Vitali}
Let $(X,d)$ and $\mu$ be as above. Let $G \subset X$ and $0<\alpha<\beta<\infty$ such that for every $x \in G$ there exists some $r_0(x)>0$ such that
$$
\mu(B(x, \alpha r)) \leq \beta \mu(B(x,r))$$
for all $0<r < r_0(x)$. Then the family of closed balls $\cF=\{B(x,r): x\in G, r < r_0(x)\}$
is a Vitali relation for $G$.
\end{theorem}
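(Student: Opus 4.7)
The plan is to adapt the classical Vitali covering argument for measures satisfying a local doubling condition; compare \cite[Theorem 2.8]{mat:geometry}. We may assume $\alpha > 1$, since otherwise the hypothesis is vacuous when $\beta \ge 1$. Writing $G = \bigcup_m G_m$ with $G_m = \{x \in G : r_0(x) > 1/m\}$ and using countable subadditivity of the error term $\mu(\,\cdot\, \setminus \bigcup_i B_i)$, we reduce to the case where $A \subset G$ is bounded and $r_0(x) \ge \rho > 0$ uniformly on $A$. Fix the smallest integer $k$ with $\alpha^k \ge 3$, and further restrict $\cF$ to balls of radius at most $\rho/\alpha^{k-1}$; this truncated family is still a fine cover of $A$.

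The disjoint subfamily $\{B_i = B(x_i, r_i)\} \subset \cF$ is constructed greedily: having chosen $B_1, \ldots, B_{i-1}$, set
$$R_i = \sup\{r : B(x, r) \in \cF, \ x \in A, \ B(x, r) \cap B_j = \emptyset \text{ for all } j < i\},$$
and pick $B_i$ disjoint from $B_1, \ldots, B_{i-1}$ with $r_i \ge R_i/2$. If $R_i = 0$ the procedure terminates.

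The conclusion $\mu(A \setminus \bigcup_i B_i) = 0$ is then established as follows. Fix $N$ and let $x \in A \setminus \bigcup_{i=1}^N B_i$. By fineness, pick $B(x, r) \in \cF$ so small that $B(x, r) \cap B_j = \emptyset$ for $j \le N$. By maximality of the greedy selection, $B(x, r)$ must intersect some $B_j$ with $j > N$; then $r \le R_j \le 2 r_j$, and the triangle inequality gives $x \in B(x_j, 3 r_j) \subset B(x_j, \alpha^k r_j)$. Since $r_j \le \rho/\alpha^{k-1}$, iterating the local doubling hypothesis $k$ times yields $\mu(B(x_j, \alpha^k r_j)) \le \beta^k \mu(B_j)$. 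Summing,
$$\mu\Bigl(A \setminus \bigcup_{i=1}^N B_i\Bigr) \le \sum_{j > N} \mu(B(x_j, \alpha^k r_j)) \le \beta^k \sum_{j > N} \mu(B_j),$$
which tends to $0$ as $N \to \infty$ because $\sum_j \mu(B_j) \le \mu(\bigcup_j B_j) < \infty$.

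The main obstacle is the point-dependent nature of the threshold $r_0(\cdot)$, which obstructs a direct global iteration of the doubling step. This is overcome by the initial reduction to a subset on which $r_0$ admits a uniform lower bound $\rho$, together with the truncation of $\cF$ to radii below $\rho/\alpha^{k-1}$, which guarantees that all intermediate radii in the iteration $r_j, \alpha r_j, \ldots, \alpha^k r_j$ stay within the regime where the doubling applies.
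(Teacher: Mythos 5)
Your overall strategy is the right one and is exactly what the paper intends: the paper's own ``proof'' is a one-line pointer to \cite[Theorem 1.6]{heinonen:book}, and the substantive modification required there --- restricting, on a piece where $r_0>\rho$, to balls of radius at most $\rho/\alpha^{k-1}$ so that the local doubling inequality can be iterated $k$ times with every intermediate radius staying below the threshold --- is precisely what you do. Granted the two covering facts feeding into it, your tail estimate $\mu(A\setminus\bigcup_{i\le N}B_i)\le\beta^k\sum_{j>N}\mu(B_j)\to 0$ is correct. (As you observe, the statement is only meaningful for $\alpha>1$, which is how it is applied in the paper, with $\alpha=2$ and $\beta=2^{s+1}$.)

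There is, however, a genuine gap at the sentence ``by maximality of the greedy selection, $B(x,r)$ must intersect some $B_j$ with $j>N$.'' In the sequential scheme you chose ($r_i\ge R_i/2$), maximality alone does not give this: if $B(x,r)$ were disjoint from \emph{every} selected ball, it would remain a candidate at every stage, yielding only $r\le R_i$ and hence $r_i\ge r/2$ for all $i$; in a general metric space there is nothing contradictory about infinitely many pairwise disjoint balls of radius at least $r/2$ centered in a bounded set. To close this you must either (a) run the $5r$-covering lemma in its Zorn's-lemma form, choosing maximal disjoint subfamilies within dyadic radius classes, where maximality inside each class yields the intersection property for \emph{all} balls of the family by construction; or (b) prove $r_i\to 0$. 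Heinonen gets (b) from $\sum_i\mu(B_i)<\infty$ plus a uniform positive lower bound on $\mu(B(x_i,r/2))$, but that lower bound iterates the doubling \emph{upward} from scale $r/2$ to a scale comparable to $\diam A$, which with the local hypothesis only works after further chopping $A$ into pieces of diameter comparable to $\rho$ and discarding the $\mu$-null set $G\setminus\supp(\mu)$ --- neither of which you do. (In the intended application $X=\hn$ is proper, so bounded sets are totally bounded and two centers would come within $r/2$ of each other, contradicting disjointness; but properness is not among your hypotheses, and it is also what justifies your unproved assertion $\mu\bigl(\bigcup_jB_j\bigr)<\infty$.) A secondary issue: the disjoint families produced for the different pieces $G_m\cap B(x^*,m)$ need not be mutually disjoint, since a countable union of closed balls is not closed, so ``countable subadditivity of the error term'' does not assemble them into one Vitali family; the standard repair extracts at each stage a \emph{finite} disjoint subfamily capturing a fixed fraction of the measure inside a suitable open neighborhood and iterates.
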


The proof of Theorem \ref{Vitali} follows after a few straightforward modifications in the proof of \cite[Theorem 1.6]{heinonen:book}. Using Theorem \ref{Vitali} we obtain the following version of the Lebesgue differentiation theorem.

\begin{theorem}\label{ldt}
Let $\mu$ be a Radon measure on a metric space $X$ and let $f$ be a nonnegative locally integrable function. If there exists a set $G \subset X$ as in Theorem \ref{Vitali} then
$$\lim_{r \ra 0} \frac{1}{\mu(B(x,r))} \int_{B(x,r)}f\, d\mu=f(x)$$
for $\mu$-a.e. $x \in G$.
\end{theorem}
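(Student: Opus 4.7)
My plan is to derive Theorem \ref{ldt} from Theorem \ref{Vitali} by the classical three-step scheme: a weak-type $(1,1)$ maximal inequality extracted from the Vitali property, the elementary differentiation identity for continuous functions, and an $L^1$-density approximation argument combining the two.

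First I would introduce, for $f \in L^1(\mu)$, the restricted centered maximal function
$$
M_{\cF} f(x) = \sup_{0 < r < r_0(x)} \frac{1}{\mu(B(x,r))} \int_{B(x,r)} |f|\, d\mu, \qquad x \in G,
$$
and prove the weak-type estimate $\mu(\{x \in G : M_{\cF} f(x) > t\}) \le t^{-1} \|f\|_{L^1(\mu)}$ for every $t > 0$. For this, for each $x$ in the superlevel set $E_t$ choose a ball $B(x,r_x) \in \cF$ with $\int_{B(x,r_x)} |f|\, d\mu > t\, \mu(B(x,r_x))$; the resulting subfamily is fine at every point of $E_t$, and Theorem \ref{Vitali} supplies a disjoint countable $\{B_i\}$ that covers $E_t$ modulo a $\mu$-null set. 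Summing $\mu(B_i) < t^{-1}\int_{B_i}|f|\, d\mu$ over the disjoint family yields the estimate. Second, for any continuous $g$ and any $x \in \supp(\mu)$ the elementary bound
$$
\left| \frac{1}{\mu(B(x,r))} \int_{B(x,r)} g\, d\mu - g(x) \right| \le \sup_{y \in B(x,r)} |g(y) - g(x)|
$$
gives $\mu(B(x,r))^{-1}\int_{B(x,r)} g\, d\mu \to g(x)$ as $r \to 0$. Third, for $f \in L^1_{\loc}(\mu)$, localize to a relatively compact open set $U$, choose continuous compactly supported $g_k$ with $\|f - g_k\|_{L^1(U,\mu)} < 4^{-k}$, and apply the maximal inequality together with Chebyshev to the differences: both $\mu(\{x \in G \cap U : M_{\cF}(f - g_k)(x) > 2^{-k}\})$ and $\mu(\{x \in U : |f(x) - g_k(x)| > 2^{-k}\})$ are bounded by $2^{-k}$, so Borel--Cantelli forces the set of $x \in G \cap U$ where the differentiation limit fails to have $\mu$-measure zero. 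A countable exhaustion of $X$ by such $U$ completes the proof.

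The main technical point is Step 1: one must apply Theorem \ref{Vitali} to the specific subfamily of $\cF$ consisting of "bad" balls with large $|f|$-average rather than to $\cF$ itself. In the standard formulation of Vitali relations one is allowed to restrict to any fine subfamily, and this robustness should be read off from the proof of Theorem \ref{Vitali} borrowed from \cite[Theorem 1.6]{heinonen:book}, which selects disjoint balls greedily from any prescribed fine cover. Modulo this verification the remaining steps are routine, with the $L^1$-density of continuous compactly supported functions following from the Radon property of $\mu$ via Lusin's theorem.
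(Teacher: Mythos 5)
Your overall three-step scheme (weak-type bound, exact differentiation of continuous functions, $L^1$-approximation plus Borel--Cantelli) is the standard route, and Steps 2 and 3 are fine; the paper itself omits the proof and simply refers to \cite[Theorem 1.8]{heinonen:book}. The problem lies precisely in the step you single out as the main technical point. The family of ``bad'' balls $\{B(x,r): x\in E_t,\ 0<r<r_0(x),\ \int_{B(x,r)}|f|\,d\mu > t\,\mu(B(x,r))\}$ is \emph{not} a fine cover of $E_t=\{x\in G: M_{\cF}f(x)>t\}$: the condition $M_{\cF}f(x)>t$ produces \emph{some} admissible radius with a large average, not arbitrarily small ones (take $f$ concentrated on an annulus about $x$, so that small balls centered at $x$ carry average zero while one larger ball exceeds $t$). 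A fortiori a subfamily with one ball per point is not fine. Consequently neither the literal Theorem \ref{Vitali} (whose disjoint balls are drawn from all of $\cF$ and need not satisfy the large-average inequality) nor the fine-subfamily strengthening you propose to read off from Heinonen's proof justifies the chain $\mu(E_t)\le\sum_i\mu(B_i)< t^{-1}\|f\|_{L^1(\mu)}$ as you have set it up.

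Two repairs are available. The cheap one, which preserves your architecture, is to replace the maximal function by the upper derivate $\overline{D}h(x)=\limsup_{r\to0}\mu(B(x,r))^{-1}\int_{B(x,r)}|h|\,d\mu$: for the superlevel set $\{x\in G:\overline{D}h(x)>t\}$ the bad balls \emph{do} form a fine cover, since by definition of $\limsup$ arbitrarily small bad radii exist at each of its points; the fine-cover Vitali theorem then yields $\mu(\{x\in G:\overline{D}h(x)>t\})\le t^{-1}\|h\|_{L^1(\mu)}$, and this weaker inequality is all that Step 3 requires, because $\limsup_{r\to0}\bigl|\mu(B(x,r))^{-1}\int_{B(x,r)}(f-g_k)\,d\mu\bigr|\le\overline{D}(f-g_k)(x)$. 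The alternative is to prove the genuine weak-type inequality for $M_{\cF}$ via the $5r$-covering lemma, comparing $\mu(B(x_i,5r_i))$ with $\mu(B(x_i,r_i))$ by iterating the hypothesis $\mu(B(x,\alpha r))\le\beta\mu(B(x,r))$; this works but requires shrinking $r_0(x)$ so that all iterates stay in the admissible range, and it rests on a different covering theorem than Theorem \ref{Vitali}. Either way the statement follows; as written, your Step 1 does not go through.
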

We omit the proof of Theorem \ref{ldt} as it follows very closely the proof of \cite[Theorem 1.8]{heinonen:book}.

\begin{proof}[Proof of Theorem \ref{marstrand}]
By assumption the set $$G=\{x \in \hn: \Theta^s(\mu,x)\in (0, \infty)\}$$
is non-empty. It follows easily that for every $x \in G$ there exists some $r_0(x)>0$ such that $\mu(B(x,2r))\leq 2^{s+1} \mu(B(x,r))$ for all $r<r_0(x)$. In particular, by Theorem \ref{Vitali} the family of closed balls
$$\cF=\{B(x,r):x \in G, r <r_0(x)\}$$
is a Vitali relation in $G$. We consider the Borel measurable functions $d_k(x)=k^s\, \mu(B(x, \frac{1}{k}))$. By the theorems of Egorov and Lusin we conclude that there is a compact set $B \subset G$ with $\mu(B)>0$ and a continuous function $d:B \ra (0,\infty)$ such that $g_k$ converges to $d$ uniformly.
Since $\cF$ is a Vitali relation in $G$ and $B \subset G$ we can apply Theorem \ref{ldt} to the function $\chi_{B^c}$  and infer that
\begin{equation}\label{0dens}
\lim_{r \ra 0} \frac{\mu(B(x_0,r) \setminus B)}{\mu(B(x_0,r))}=0,
\end{equation}
for $\mu$-a.e.\ $x_0 \in B$.

We now pick some $x_0 \in B$ which satisfies \eqref{0dens} and define a sequence of measures $\{\nu_k\}_{k \in \N}$ by
$$
\nu_k(A)=k^s\mu(x_0 \cdot \delta_{\frac{1}{k}}(A)), \qquad \qquad A \subset \hn.
$$
Notice that for $N\in \N$,
\begin{equation*}
\begin{split}
\lim_{k \ra \infty} \nu_k(B(0, N))&=\lim_{k \ra \infty} k^s\mu(x_0 \cdot \delta_{\frac{1}{k}}(B(0, N)))\\
&=N^s\lim_{k \ra \infty} (k/N)^s\mu((B(x_0, N/k)))=N^s \, d(x_0)>0.
\end{split}
\end{equation*}
Therefore, since $C_0(\hn)$ is separable under the sup-norm, we can apply \cite[Theorem~1.23]{mat:geometry} in order to extract a subsequence $(\nu_{k_i})$ converging to a Radon measure $\nu$. In the sequel we use the standard facts that
$$
\nu(K) \geq \limsup_{i \ra \infty} \mu_{k_{i}}(K)
$$
and
$$
\nu(G) \leq \liminf_{i \ra \infty} \mu_{k_{i}}(G)
$$
for all compact $K$ and open $G$, see e.g.\ \cite[Theorem~1.24]{mat:geometry}. Since
\begin{equation*}\begin{split}
\nu(B(0, 1))& \geq \limsup_{i \ra \infty}\, (k_i)^s\mu(x_0 \cdot \delta_{\frac{1}{k_i}}(B(0, 1)))\\
&=\limsup_{i \ra \infty} \,(k_i)^s\mu\left(B\left(x_0, \frac{1}{k_i}\right)\right)= d(x_0)>0,
\end{split}\end{equation*}
we easily conclude that $\supp(\nu)$ is nonempty. Let $x \in \supp(\nu)$, $R>0$ and $\ve<R$. By \eqref{0dens} we have
\begin{equation*}
\begin{split}
\lim_{i \ra \infty} &\mu_{k_{i}}(B(0,\|x\|+2\ve) \setminus \delta_{k_i}(x_0^{-1}\cdot B))=\lim_{i \ra \infty} \,(k_i)^s\mu (x_0 \cdot \delta_{\frac{1}{k_i}}(B(0,\|x\|+2\ve) \setminus \delta_{k_i}(x_0^{-1}\cdot B))) \\
&=\lim_{i \ra \infty} \,(k_i)^s\mu\left(B\left(x_0, \frac{\|x\|+2\ve}{k_i}\right) \setminus B\right) \\
&=\lim_{i \ra \infty} \frac{\mu\left(B\left(x_0, \frac{\|x\|+2\ve}{k_i}\right) \setminus B\right)}{\mu\left(B\left(x_0, \frac{\|x\|+2\ve}{k_i}\right)\right)} \, (k_i)^s \mu\left(B\left(x_0, \frac{\|x\|+2\ve}{k_i}\right)\right)=0.
\end{split}
\end{equation*}
Using also the fact that
$$
\liminf_{i \ra \infty} \mu_{k_i} (B(x,\ve)) \geq \nu (B(x, \ve/2))>0,
$$
we conclude that there exist points $y_i \in B(x, \ve) \cap \delta_{k_i}(x_0^{-1}\cdot B)$. Observe that $x_0 \cdot \delta_{\frac{1}{k_i}}(y_i) \in B$. Therefore
\begin{equation*}
\begin{split}
\nu(B(x, R))& \geq \limsup_{i \ra \infty} \mu_{k_{i}}(B(x,R)) \geq \limsup_{i \ra \infty} \mu_{k_{i}}(B(y_i,R-\ve)) \\
&=\limsup_{i \ra \infty}\, (k_i)^s\mu(x_0 \cdot \delta_{\frac{1}{k_i}}(B(y_i, R-\ve))\\
&=\limsup_{i \ra \infty}\, (k_i)^s\mu\left( \left(B(x_0 \cdot \delta_{\frac{1}{k_i}}(y_i), \frac{R-\ve}{k_i}\right)\right) \geq (R-\ve)^s d(x_0),
\end{split}
\end{equation*}
where the final inequality follows because the functions $g_k$ converge uniformly to $d$.

In a similar manner we obtain that
$$
\nu (U(x, R+ \ve)) \leq d(x_0) (R+2\ve)^s.
$$
Letting $\ve \ra 0$ we conclude that $\nu$ is an $s$-uniform measure. Hence we deduce, see e.g.\ \cite[Theorem~2.4.3]{at:notes}, that $\dim_{H}(\supp(\nu))=s$. By Proposition \ref{realanal}, $\supp(\nu)$ is a real analytic variety in $\hn$ and in particular $\dim_{H}(\supp(\nu))$ and hence also $s$, is an integer.
\end{proof}

\begin{remark}\label{tangent}
Theorem \ref{marstrand} can also be established with the aid of tangent measures. Tangent measures, introduced by Preiss in \cite{preiss:rectifiability}, have subsequently become important tools in geometric measure theory. See \cite{mat:geometry} for an extensive treatment in Euclidean spaces, and \cite{mat:tan} or \cite{chousionis-mattila:hessio} for applications in the setting of metric groups with dilations, including the Heisenberg group.

Let $\mu$ be a Radon measure in $\hn$. The image $f_\#\mu$  under a map $f:\hn \to \hn$ is the measure
on $\hn$ defined by
$$f_\#\mu(A)=\mu\big(f^{-1}(A)\big)\ \text{for all }\ A \subset \hn.$$
For $a\in \hn$ and $r>0$, $T_{a,r}: \hn \to \hn$ is defined for all $p\in \hn$ by
$$T_{a,r}(p)= \delta_{1/r}(a^{-1}\cdot p).$$

\begin{definition}\label{tanm}
Let $\mu$ be a Radon measure on $\hn$.
We say that $\nu$ is a \emph{tangent measure} of $\mu$ at $a\in \hn$ if $\nu$ is a Radon measure on
$\hn$ with $\nu(\hn)>0$ and there are positive numbers $c_i$ and
$r_i$, $i=1,2,\dots$, such that $r_i\to 0$ and
$$c_iT_{a,r_i\#}\mu \to \nu\ \text{weakly as}\ i\to\infty.$$
We denote by $\tanm(\mu,a)$ the set of all tangent measures of $\mu$ at a.
\end{definition}

Let $\mu$ as in the assumptions of Theorem \ref{marstrand}. With the help of Theorems \ref{Vitali} and \ref{ldt} we can reproduce the argument in Theorem \cite[Lemma~14.7.1]{mat:geometry} in order to show that if $$A=\{x \in \hn: \Theta^s(\mu,x)\mbox{ is positive and finite}\},$$
then for $\mu$-a.e. $x \in A$, every $\nu \in \tanm (\mu,x)$ is $s$-uniform. Hence we deduce, see e.g.\ \cite[Theorem~2.4.3]{at:notes}, that $\dim_{H}(\supp(\nu))=s$. By Proposition \ref{realanal}, $\supp(\nu)$ is a real analytic variety in $\hn$ and in particular $\dim_{H}(\supp(\nu))$ and hence also $s$, is an integer.
\end{remark}

The following theorem is a Heisenberg adaptation of \cite[Corollary 1.6]{kp:uniformly-distributed}.

\begin{theorem}\label{algebraic}
Let $\mu$ be a uniformly distributed measure in $(\hn,d_H)$ with bounded support. Then $\supp(\mu)$ is an algebraic variety.
\end{theorem}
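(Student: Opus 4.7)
The plan is to refine the real-analytic description of $\supp(\mu)$ given in Proposition \ref{realanal} by exploiting the boundedness of the support to expand the defining function $F(x,s)$ as a convergent power series in $s$ whose coefficients are polynomials in $x$. Specifically, fix $x_0 \in \supp(\mu)$ and recall from the proof of Proposition \ref{realanal} that $\supp(\mu)$ coincides with the set of $x \in \R^{2n+1}$ for which $F(x,s) = F_1(x,s) - F_1(x_0,s) = 0$ for every $s > 0$, where $F_1(x,s) = \int \exp(-s\|x^{-1}\cdot z\|^4)\, d\mu(z)$.

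The key observation will be that
$$\|x^{-1}\cdot z\|^4 = \|z'-x'\|_{\R^{2n}}^4 + (z_{2n+1} - x_{2n+1} - A(x',z'))^2$$
is, for each fixed $z$, a polynomial of degree $4$ in the variable $x$; consequently $\|x^{-1}\cdot z\|^{4k}$ is a polynomial of degree at most $4k$ in $x$, with coefficients depending polynomially on $z$. Since $\supp(\mu)$ is bounded, for each fixed $x$ there is a constant $C(x) < \infty$ such that $\|x^{-1}\cdot z\| \leq C(x)$ for every $z \in \supp(\mu)$. The bound
$$\sum_{k=0}^\infty \int \frac{s^k \|x^{-1}\cdot z\|^{4k}}{k!}\, d\mu(z) \leq e^{sC(x)^4}\mu(\supp(\mu)) < \infty$$
then justifies, via Fubini's theorem, the term-by-term integration
$$F_1(x,s) = \sum_{k=0}^\infty \frac{(-s)^k}{k!}\, P_k(x), \qquad P_k(x) := \int \|x^{-1}\cdot z\|^{4k}\, d\mu(z).$$
Each $P_k$ is then a polynomial in $x$, being the integral of a $z$-parametrised polynomial of bounded degree against a compactly supported finite measure; and $F(x,\cdot)$ extends to an entire function of $s$ for every $x$.

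To conclude, I would appeal to the uniqueness of power series: since $F(x,\cdot)$ is entire in $s$, the vanishing $F(x,s) = 0$ for all $s > 0$ is equivalent to the vanishing of every Taylor coefficient at $s = 0$, i.e.\ to the system of polynomial equations $P_k(x) = P_k(x_0)$ for $k = 0,1,2,\dots$ This identifies
$$\supp(\mu) = \bigcap_{k=0}^\infty \{x \in \R^{2n+1} : P_k(x) - P_k(x_0) = 0\}$$
as the common zero set of a countable family of real polynomials on $\R^{2n+1}$, hence an algebraic variety. The only step requiring any care is the exchange of summation and integration, but this is entirely routine in view of the bounded-support hypothesis, so I anticipate no serious obstacle. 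This argument is a direct adaptation to the Heisenberg setting of the Euclidean proof of \cite[Corollary 1.6]{kp:uniformly-distributed}, with the Kor\'anyi fourth power $\|\cdot\|^4$ playing the role of the Euclidean squared distance.
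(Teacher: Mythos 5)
Your proposal is correct and follows essentially the same route as the paper: expand $F(x,s)$ as a convergent power series in $s$ (justified by the bounded support) whose coefficients $P_j(x)-P_j(x_0)$ are polynomials in $x$, and identify $\supp(\mu)$ with their common zero set. The only detail the paper makes explicit that you leave implicit is the appeal to Hilbert's basis theorem to replace the countable family of polynomials by finitely many of them; with that one remark added, the two arguments coincide.
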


\begin{proof}
Since $\supp(\mu)$ is bounded, the function $F(x,s)$ from Proposition \ref{realanal} admits the expansion
$$
F(x,s) = \sum_{j=0}^\infty \frac{(-s)^j}{j!} \int_{\R^{2n+1}} \left( \|x^{-1}\cdot z\|_H^{4j} - \|x_0^{-1}\cdot z\|_H^{4j} \right) \, d\mu(z).
$$
It easily follows that $F(x,s)=0$ for every $s>0$ if and only if the functions
$$
P_j(x) = \int_{\R^{2n+1}} \left( \|x^{-1}\cdot z\|_H^{4j} - \|x_0^{-1}\cdot z\|_H^{4j} \right) \, d\mu(z).
$$
vanish for every $j=1,2,\ldots$. Each $P_j$ is a polynomial in the coordinates of the point $x \in \hn$. As in the proof of Corollary 1.6 of \cite{kp:uniformly-distributed}, an appeal to Hilbert's theorem for polynomials over a Noetherian ring implies that $\supp(\mu)$ coincides with the simultaneous vanishing set of finitely many of the polynomials $P_j$, whence $\supp(\mu)$ is an algebraic variety.
\end{proof}

\section{Examples and discussion}\label{sec:examples}

In this section we exhibit uniform or uniformly distributed measures in the Heisenberg group $\hn$ equipped with the Kor\'anyi metric $d_H$. New phenomena arise which lack any Euclidean counterpart. Numerous questions remain; we indicate several of these in the course of our discussion.

\subsection{Uniform measures}\label{subsec:uniform}

We have already remarked (see section \ref{subsec:homogeneous}) that the volume measure on any homogeneous subgroup of $\hn$ is uniform. In particular, vertical hypersurfaces of $\hn$ support $(Q-1)$-uniform measures. Recall that $Q=2n+2$ is the Hausdorff dimension of $(\hn,d_H)$. In Euclidean space $\R^n$, the classification of uniform measures supported on hypersurfaces (possibly with singularities) is due to Kowalski and Preiss \cite{kp:besicovitch}: any such measure is a constant multiple of the Hausdorff measure $\cH^{n-1}$ supported on either a hyperplane or an isometric image of $C\times\R^{n-4}$ where $C$ denotes the light cone
$$
C := \{ x \in \R^4 \, | \, x_4^2 = x_1^2 + x_2^2 + x_3^2 \}.
$$

The Kowalski--Preiss light cone example can be isometrically embedded into Heisenberg groups of sufficiently large dimension, yielding new examples of uniform measures supported on submanifolds. Note that due to the geometry of the Heisenberg group we do {\bf not} obtain that the codimension one algebraic variety
$$
\{ (x_1,\ldots,x_n,x_{n+1},\ldots,x_{2n},x_{2n+1} ) \in \hn  \, | \, (x_1,x_2,x_3,x_4) \in C \},
$$
supports a $(Q-1)$-uniform measure. We do however obtain the following

\begin{proposition}\label{light-cones}
Let $n\ge 4$.

(i) The set
$$
C_H := \{ (x_1,\ldots,x_k,0,\ldots,0,0) \in \hn  \, | \, (x_1,x_2,x_3,x_4) \in C, 4\le k \le n \},
$$
or any image of $C_H$ by an isometry of $(\hn,d_H)$, supports a $(k-1)$-uniform measure.

(ii) The set
\begin{equation}\label{tildeCH}
\tilde{C_H} := \left\{ (x_1,\ldots,x_k,0,\ldots,0,x_{2n+1} ) \in \hn  \, | \, (x_1,x_2,x_3,x_4) \in C, 4\le k \le n, x_{2n+1} \in \R \right\},
\end{equation}
or any image of $C_H$ by an isometry of $(\hn,d_H)$, supports a $(k+1)$-uniform measure.
\end{proposition}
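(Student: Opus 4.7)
The strategy is to reduce both statements to the Kowalski--Preiss theorem in Euclidean space by recognizing the relevant ambient subgroup of $\hn$ as isometric to a simpler model. For (i), because $k\le n$, the subspace $\V=\{(x_1,\ldots,x_k,0,\ldots,0,0)\}$ is isotropic in the horizontal layer: every term of the symplectic form \eqref{A} involves an index $j+n>n\ge k$ and hence vanishes identically on $\V\times\V$. Thus $\V$ is a horizontal homogeneous subgroup on which the group law reduces to ordinary addition, the Kor\'anyi norm reduces to $\|x^{-1}\cdot y\|=|x-y|_{\R^k}$, and $(\V,d_H)$ is isometric to Euclidean $(\R^k,|\cdot|)$. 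Under this identification $C_H$ corresponds to $C\times\R^{k-4}\subset\R^k$, so the Kowalski--Preiss theorem \cite{kp:besicovitch} provides the $(k-1)$-uniform measure $\cH^{k-1}\restrict(C\times\R^{k-4})$, which transports back to a $(k-1)$-uniform measure supported on $C_H\subset\hn$.

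For (ii) the vertical subgroup $\W=\{(x_1,\ldots,x_k,0,\ldots,0,x_{2n+1})\}$ is subject to the same index cancellation, so $x^{-1}\cdot y=(y'-x',\,y_{2n+1}-x_{2n+1})$ for $x,y\in\W$ and consequently
\begin{equation*}
\|x^{-1}\cdot y\|=\bigl(|x-y|_{\R^k}^4+(x_{2n+1}-y_{2n+1})^2\bigr)^{1/4}.
\end{equation*}
Hence $(\W,d_H)$ is isometric to $(\R^{k+1},d_k)$, where $d_k$ is the analog in dimension $k+1$ of the metric \eqref{heat-d}, and $\tilde C_H$ corresponds to $(C\times\R^{k-4})\times\R$. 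Setting $\nu:=\cH^{k-1}\restrict(C\times\R^{k-4})$ in Euclidean $\R^k$ and $\mu:=\nu\otimes\cL^1$ on $\R^{k+1}$, I would compute $\mu(B_{d_k}(x,r))$ for $x=(x',x_{k+1})\in\supp\mu$ by applying Fubini in the vertical coordinate, which contributes a factor $2\sqrt{r^4-|x'-y'|^4}$ on the horizontal slice $\{|x'-y'|\le r\}$, followed by a radial layer-cake decomposition using the identity $\nu(B_E(x',\rho))=c\rho^{k-1}$ coming from part (i):
\begin{equation*}
\mu(B_{d_k}(x,r))=\int_0^r 2\sqrt{r^4-\rho^4}\,c(k-1)\rho^{k-2}\,d\rho=C_k\,r^{k+1},
\end{equation*}
the last equality via the substitution $\rho=r\sigma$. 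The independence of this value on $x\in\supp\mu$ is precisely the desired $(k+1)$-uniformity.

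The claims about isometric images are automatic: pushforward by an isometry of $(\hn,d_H)$ sends an $s$-uniform measure to an $s$-uniform measure on the isometric image. The main obstacle I anticipate is bookkeeping rather than depth: one must verify that the Kor\'anyi ball in the ambient $\hn$, restricted to $\V$ or $\W$, coincides with the expected metric ball in the identified model, so that the intrinsic computations above do not miss mass coming from outside the subgroup. Once the isotropy forced by $k\le n$ has annihilated the symplectic contribution, however, this verification is immediate from the defining formula of $d_H$, and all that remains is the single explicit integral above tying the Kowalski--Preiss measure on the slice $C\times\R^{k-4}$ to the vertical direction.
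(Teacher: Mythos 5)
Your proof is correct and follows essentially the same route as the paper: part (i) by identifying the horizontal homogeneous subgroup $\V$ isometrically with Euclidean $\R^k$ (so that Kor\'anyi balls restricted to $C_H$ are Euclidean balls) and invoking Kowalski--Preiss, and part (ii) by taking the product with $\cL^1$ in the vertical direction and computing the measure of a ball via Fubini. The only cosmetic difference is that the paper packages part (ii) as a standalone lemma and performs the Fubini integration in the opposite order, integrating the known slice measure $c\rho^{k-1}$ over the vertical variable rather than integrating the vertical fiber length against $\nu$ via a layer-cake decomposition; both computations yield the same constant times $r^{k+1}$.
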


It is well known that the isometries of $\hn$ are generated by left translations, by the `reflection' $(x_1,\ldots,x_{2n+1}) \mapsto (-x_1,\ldots,-x_n,x_{n+1},\ldots,x_{2n},-x_{2n+1})$, and by the rotations $(x',x_{2n+1}) \mapsto (Ax',x_{2n+1})$, where $A \in U(n)$ acts on the point $(x_1+\bi x_{n+1},\ldots,x_n+\bi x_{2n}) \in \C^n$, $x'=(x_1,\ldots,x_{2n})$.
Moreover, the similarities of $\hn$ are generated by the isometries of $\hn$ together with the dilations $\delta_r$, $r>0$.

\begin{proof}
To prove (i), fix $k$, $4\le k\le n$, as in the statement of the proposition, and consider the horizontal homogeneous subgroup $\V = \{ (x_1,\ldots,x_k,0,\ldots,0) \in \hn \, : \, x_1,\ldots,x_k \in \R \}$. The restriction of $d_H$ to $\V$ coincides with the restriction of the Euclidean metric of $\R^{2n+1}$ to $\V$. Since $C_H \subset \V$ we see that for $x \in C_H$ and $r>0$,
\begin{equation}\label{equality-of-balls}
B_H(x,r) \cap C_H = B_E(x,r) \cap C_H
\end{equation}
where $B_E(x,r)$ denotes the Euclidean ball in $\R^{k}$ with center $x$ and radius $r$. By the result of Kowalski--Preiss, the set $C_H$, equipped with the Euclidean metric, supports a $(k-1)$-uniform measure. By \eqref{equality-of-balls} this measure is also $(k-1)$-uniform for the Kor\'anyi metric restricted to $C_H$.

Part (ii) follows from part (i) and the following lemma.
\end{proof}

\begin{lemma}
Let $S \subset \R^n$ support an $m$-uniform measure $\mu$ for some $m \in \{0,1,\ldots,n\}$. Then $(S\times\R,d)$ supports an $(m+2)$-uniform measure, where $d$ denotes the metric on $\R^{n+1}$ given in \eqref{heat-d}.
\end{lemma}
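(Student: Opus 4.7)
The plan is to take $\tilde\mu := \mu\times\L^1$ as the candidate measure on $S\times\R\subset\R^{n+1}$ and verify the $(m+2)$-uniform identity $\tilde\mu(B_d((x,t),r))=c'\,r^{m+2}$ directly for $(x,t)\in S\times\R$. First, unpacking \eqref{heat-d}, the condition $d((x,t),(y,s))\le r$ is equivalent to $|x-y|^4+(t-s)^2\le r^4$, so the vertical fiber of $B_d((x,t),r)$ over a fixed $y\in B_E(x,r)$ is an interval in $\R$ of length $2\sqrt{r^4-|x-y|^4}$. Fubini's theorem then gives
\begin{equation*}
\tilde\mu(B_d((x,t),r))=\int_{S\cap B_E(x,r)} 2\sqrt{r^4-|x-y|^4}\,d\mu(y),
\end{equation*}
which is already independent of the height coordinate $t$.

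It remains to show that the right hand side depends only on $r$ when $x\in S$, and equals a positive multiple of $r^{m+2}$. For this I would invoke the layer cake formula applied to $\phi(\rho):=2\sqrt{(r^4-\rho^4)_+}$, which strictly decreases from $2r^2$ to $0$ as $\rho$ runs from $0$ to $r$. Its superlevel set is $\{y:\phi(|x-y|)>\tau\}=B_E\bigl(x,(r^4-\tau^2/4)^{1/4}\bigr)$, so using $\mu(B_E(x,\rho))=c\rho^m$ for $x\in S$ (from $m$-uniformity) yields
\begin{equation*}
\int \phi(|x-y|)\,d\mu(y)=\int_0^{2r^2}c\,(r^4-\tau^2/4)^{m/4}\,d\tau.
\end{equation*}
The substitution $\tau=2r^2 u$ then collapses this to $c'\,r^{m+2}$, where $c':=2c\int_0^1(1-u^2)^{m/4}\,du\in(0,\infty)$.

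Finally $\tilde\mu$ is a Radon measure as a product of two Radon measures on $\sigma$-compact spaces, and its support is $\supp(\mu)\times\R=S\times\R$, so the identity holds throughout the support and $\tilde\mu$ is $(m+2)$-uniform in the sense of Definition \ref{uniformmeas}. I do not anticipate a substantive obstacle: the argument reduces to Fubini, the layer cake formula, and a single change of variables. The only case requiring minor attention is $m=0$, where $m$-uniformity forces $S$ to consist of a single atom of mass $c$; the formula nevertheless evaluates to $2cr^2$, of the required form $c'\,r^{m+2}$ with $m+2=2$.
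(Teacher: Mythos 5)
Your proposal is correct and is essentially the paper's argument: both take the product measure $\mu\otimes\mathcal{L}^1$ on $S\times\R$ and evaluate $\tilde\mu(B_d((x,t),r))$ by Fubini, arriving at the same constant $2c\int_0^1(1-u^2)^{m/4}\,du$. The only difference is the order of integration --- the paper integrates over the vertical coordinate first, so each horizontal slice is a Euclidean ball whose $\mu$-measure is known outright and no layer-cake step is needed, whereas you integrate over $y$ first and then recover the same one-dimensional integral via the distribution function.
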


\begin{proof}
Let $c>0$ be such that $\mu(B(x,r))=cr^m$ for all $x \in S$ and $r>0$. Consider the measure $\nu=\mu\otimes\cL^1$ on $S\times\R$. For $x=(x',x_{2n+1}) \in S \times \R$ and $r>0$ we use the Fubini theorem to derive
\begin{equation*}\begin{split}
\nu(B_d(x,r)\cap(S\times\R)) &= \int_{x_{2n+1}-r^2}^{x_{2n+1}+r^2} \mu(B(x',\sqrt[4]{r^4-(x_{2n+1}-y_{2n+1})^2})\cap S) \, dy_{2n+1} \\
&= c\int_{x_{2n+1}-r^2}^{x_{2n+1}+r^2} (r^4-(x_{2n+1}-t)^2)^{m/4} \, dt \\
&= c\int_{-r^2}^{r^2} (r^4-t^2)^{m/4} \, dt = cr^{m+2} \int_{-1}^{1} (1-\tau^2)^{m/4} \, d\tau.
\end{split}\end{equation*}
Hence $\nu$ is an $(m+2)$-uniform measure on $S\times\R$.
\end{proof}

\begin{remarks}
In \cite[\S 3]{kp:besicovitch}, Kowalski and Preiss draw further conclusions in the Euclidean setting. For instance, they prove that $\cH^{m+1} \restrict (M\times\R)$ is $(m+1)$-uniform if and only if $\cH^m \restrict M$ is $m$-uniform. The proof of the converse direction uses the equality of $\cH^{m+1} \restrict (M\times\R)$ with the product measure $\cH^m\restrict M \otimes \cL^1$ for countably $m$-rectifiable sets $M \subset \R^n$. In our setting, when the metric on $\R^{n+1}$ is not the standard Euclidean metric, we do not know whether such equality holds.
\end{remarks}

\begin{problem}
Does there exist any $C^1$ hypersurface (or more generally, algebraic variety) in $(\hn,d_H)$, other than vertical hyperplanes, which supports a uniform measure?
\end{problem}

\subsection{Uniformly distributed measures}\label{subsec:unif-distributed}

Kirchheim and Preiss \cite[Section 2]{kp:uniformly-distributed} characterized uniformly distributed measures in $\R$ and gave examples of such measures in $\R^2$. In this section we present examples in the first Heisenberg group $(\H,d_H)$.

Proposition \ref{algebraic} ensures that bounded supports of uniformly distributed measures on $\hn$ are algebraic varieties. Bounded supports of Euclidean uniformly distributed measures are contained in spheres, see \cite{chr:uniform} or  \cite[Proposition~1.7]{kp:uniformly-distributed}. We are currently unable to obtain analogous simple conclusions in the Heisenberg setting. This fact complicates attempts to characterize Heisenberg uniformly distributed measures.

We first observe (cf.\ \cite[Remark 2.5]{kp:uniformly-distributed}) that a locally finite measure is uniformly distributed provided it is invariant under a group of isometries acting transitively on the support. More precisely, if a locally finite Borel measure $\mu$ has the property that for each $x,y\in\supp(\mu)$ there exists an isometry $\Phi$ of $(\H,d_H)$ such that $\Phi(x)=y$ and $\Phi_\#\mu=\mu$, then $\mu$ is uniformly distributed. Following Kirchheim and Preiss, let us call such measures {\it homogeneous}. Recall that the isometries of $\H$ are generated by left translations, rotations about the $x_3$-axis, and the ``horizontal reflection'' $\rho$ defined by $\rho(x_1,x_2,x_3)=(x_1,-x_2,-x_3)$, and that the similarities of $\H$ are generated by the isometries and the dilations $\delta_r(x_1,x_2,x_3)=(rx_1,rx_2,r^2x_3)$.

First we consider counting measure on finite sets. In the plane, uniformly distributed counting measures with finite support are supported on either the vertices of a regular polygon, or two regular $m$-gons lying on a common circle \cite[Proposition 2.4]{kp:uniformly-distributed}. We adapt this example to $\H$ as follows. The proof of the result consists of applying the aforementioned homogeneity criterion.

\begin{proposition}
The restriction of counting measure to each of the following finite sets $A$, or its image under a similarity of $\H$, is uniformly distributed.
\begin{itemize}
\item[(i)] $A$ consists of the vertices of a regular polygon lying on the circle $\Sph^1 \times \{0\} \subset \H$, or two regular $m$-gons lying on $\Sph^1 \times \{0\}$.
\item[(ii)] For any given $\delta>0$, $A$ consists of the vertices of a regular $m$-gon lying on the circle $\Sph^1 \times \{\delta\}$ together with the vertices of a regular $m$-gon lying on the circle $\Sph^1 \times \{-\delta\}$.
\end{itemize}
\end{proposition}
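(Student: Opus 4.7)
My plan is to verify the homogeneity criterion stated immediately before the proposition: since $\mu$ is counting measure on a finite set $A$, the condition $\Phi_\#\mu=\mu$ amounts to $\Phi(A)=A$, so it suffices to exhibit a subgroup of the isometry group of $(\H,d_H)$ which stabilizes $A$ and acts transitively on it. The relevant isometries of $\H$ will be the rotations $R_\theta:(x_1,x_2,x_3)\mapsto(x_1\cos\theta-x_2\sin\theta,\,x_1\sin\theta+x_2\cos\theta,\,x_3)$ about the $x_3$-axis together with the horizontal reflection $\rho:(x_1,x_2,x_3)\mapsto(x_1,-x_2,-x_3)$. Since the proposition permits replacing $A$ by its image under an arbitrary similarity of $\H$, I may freely normalize the configuration by a preliminary rotation $R_\phi$ before checking the criterion.

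For the single $m$-gon case of part (i), place the $m$-gon so that its vertices are $v_k=(\cos(2\pi k/m),\sin(2\pi k/m),0)$; then $\langle R_{2\pi/m}\rangle$ acts transitively on the $v_k$ and stabilizes $A$. For two $m$-gons on $\Sph^1\times\{0\}$, a preliminary rotation puts the configuration in a symmetric position where the two $m$-gons have vertex angles $\{\gamma/2+2\pi k/m\}$ and $\{-\gamma/2+2\pi k/m\}$ for some $\gamma$, i.e., they are mirror-images across the $x_1$-axis. Restricted to $x_3=0$, the map $\rho$ is precisely reflection about the $x_1$-axis, which sends the angle $\theta$ to $-\theta$ and therefore swaps the two $m$-gons, while $R_{2\pi/m}$ permutes the vertices within each. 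The dihedral group $\langle R_{2\pi/m},\rho\rangle$ thus stabilizes $A$ and acts transitively on it.

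For part (ii), write the upper and lower $m$-gons as $G^\pm=\{(\cos\theta_k^\pm,\sin\theta_k^\pm,\pm\delta):k=0,\dots,m-1\}$ with $\theta_k^\pm=\alpha^\pm+2\pi k/m$. Rotations $R_{2\pi k/m}$ preserve each $G^\pm$ separately. The reflection $\rho$ sends $G^+$ onto the regular $m$-gon in $\Sph^1\times\{-\delta\}$ whose vertex angles are $\{-\alpha^++2\pi k/m\}$. Applying the preliminary similarity $R_\phi$ with $2\phi\equiv-(\alpha^++\alpha^-)\pmod{2\pi/m}$ shifts both $\alpha^\pm$ by $\phi$, and a direct angular computation then shows $\rho(G^+)=G^-$; since $\rho$ is an involution, also $\rho(G^-)=G^+$. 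The group $\langle R_{2\pi/m},\rho\rangle$ then stabilizes $A$ and acts transitively on it.

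The main (quite mild) obstacle is the angular normalization by a preliminary rotation in the two-$m$-gon cases: an arbitrary pair of regular $m$-gons in $\Sph^1\times\{0\}$ or in $\Sph^1\times\{\pm\delta\}$ need not individually be symmetric under $\rho$, but a judicious rotation about the $x_3$-axis places the configuration in standard position where $\rho$ swaps the two $m$-gons. All remaining verifications reduce to standard properties of the dihedral action on a regular polygon, and the fact that $R_\theta$ and $\rho$ are isometries of $(\H,d_H)$ is recalled in the passage preceding the proposition.
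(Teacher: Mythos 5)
Your proposal is correct and follows exactly the route the paper takes: the paper's entire proof is the remark that one applies the homogeneity criterion stated just before the proposition, and you carry this out by exhibiting the dihedral group generated by $R_{2\pi/m}$ and the horizontal reflection $\rho$ (after an angular normalization by a preliminary rotation, which is a legitimate use of the ``up to similarity'' clause). The only difference is that you supply the transitivity verification explicitly, which the paper omits.
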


A set $A$ in a metric space $(X,d)$ is called {\it equilateral} if $d(x,y)$ is constant for all $x,y\in A$, $x\ne y$. It is clear that counting measure is uniformly distributed on each equilateral set. We investigate the equilateral subsets of $(\H,d_H)$. In contrast with the Euclidean case, there exist equilateral sets on which the isometries do not act transitively. It is interesting to observe that there exist non-homogeneous uniformly distributed measures in $(\H,d_H)$, cf.\ the question on p.\ 159 in \cite{kp:uniformly-distributed}.

Every one or two point subset $A \subset \H$ is trivially equilateral. The following proposition characterizes equilateral triangles in $(\H,d_H)$. Such triangles fall into three distinct classes: (i) two vertices lie on a vertical line, (ii) two vertices lie on a horizontal line, and (iii) no two vertices lie on either a horizontal or a vertical line.

\begin{proposition}\label{equilateral-triangles}
The following sets $A \subset \H$, or their images under similarities of $\H$, are the only equilateral sets for the Kor\'anyi metric $d_H$.
\begin{itemize}
\item[(i)] $A = \{(0,0,1),(0,0,-1),((3/4)^{1/4},0,0)\}$.
\item[(ii)] $A = \{(1,0,0),(-1,0,0),(r\cos\theta,r\sin\theta,t)$, where
$$
r=r(\theta) = \left( 2 \sin\theta \sqrt{5+\sin^2\theta} - 2\sin^2\theta - 1 \right)^{1/2}
$$
and
$$
t=t(\theta) = (\cot\theta)(r^2+1)$$
for some $\theta \in [\arcsin(1/4),\pi-\arcsin(1/4)]$.
\item[(iii)] $A = \{(-x_0,\frac{\sqrt3}2,0),(-x_0,-\frac{\sqrt3}2,0),(r\cos\theta,r\sin\theta,t)\}$, where $x_0>0$,
\begin{equation}\label{t-equation}
t=t(r,\theta) = -(\tan\theta)(r^2+x_0^2+\tfrac34)
\end{equation}
and $r$ and $\theta$ are related by the implicit equation
\begin{equation}\label{xy-equation}
3(3+4x_0^2-r^2)\cos^2\theta = \left( \frac34 + \left|x_0 + re^{\bi\theta}\right|^2 \right)^2.
\end{equation}
\end{itemize}
\end{proposition}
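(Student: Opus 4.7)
The plan is to exploit the five-parameter similarity group of $(\H, d_H)$---generated by left translations, $U(1)$ rotations about the $x_3$-axis, dilations $\delta_r$, and the reflection $(x_1,x_2,x_3)\mapsto(-x_1,x_2,-x_3)$---to reduce any equilateral triangle to one of three canonical forms. After left-translating $p_1$ to the origin, the position of $p_2$ induces a trichotomy: $p_2$ on the $x_3$-axis gives case (i); $p_2$ in the horizontal plane $\{x_3=0\}$ (equivalently, $p_1,p_2$ joined by a Heisenberg horizontal line) gives case (ii); otherwise case (iii). This trichotomy is invariant under similarities, so exhaustiveness follows once each case has been fully parametrized. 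In every case I would impose the three pairwise distance equations using the formula
$$
d_H(x,y)^4 = ((x_1-y_1)^2+(x_2-y_2)^2)^2 + (y_3 - x_3 + 2(x_1 y_2 - x_2 y_1))^2,
$$
and solve for the third vertex.

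Case (i) is elementary: further similarity places $p_1=(0,0,1)$, $p_2=(0,0,-1)$; writing $p_3=(u,v,w)$, the symmetry equation $(w-1)^2=(w+1)^2$ forces $w=0$, the common-distance condition yields $(u^2+v^2)^2=3$, a $U(1)$ rotation moves $p_3$ to the positive $x_1$-axis, and a final dilation produces the displayed triple. In case (ii), normalizing the horizontal line to the $x_1$-axis gives $p_1=(1,0,0)$, $p_2=(-1,0,0)$; writing $p_3=(r\cos\theta,r\sin\theta,t)$ and subtracting the two equations $d_H(p_1,p_3)^4 = d_H(p_2,p_3)^4$ yields the linear relation $t\sin\theta = (r^2+1)\cos\theta$, which is the stated $t(\theta)$. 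Substituting into $d_H(p_1,p_3)^4=16$ produces a quadratic in $r^2$ whose positive root is the stated $r(\theta)$, and the range $\theta\in[\arcsin(1/4),\pi-\arcsin(1/4)]$ is determined by demanding nonnegativity of the discriminant and of $r^2$.

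Case (iii) is the principal obstacle. I would normalize symmetrically: rotate in $U(1)$ to align the $(x_1,x_2)$-projection of $p_1-p_2$ with the $x_2$-direction, left-translate to enforce $(p_1)_3=(p_2)_3=0$ and $(p_1)_2+(p_2)_2=0$ (three linear conditions on the three translation parameters), and dilate to set $(p_1)_2=\sqrt{3}/2$, giving $p_1=(-x_0,\sqrt{3}/2,0)$, $p_2=(-x_0,-\sqrt{3}/2,0)$ with $x_0>0$ the surviving modulus (the sign of $x_0$ is fixed using the reflection $(x_1,x_2,x_3)\mapsto(-x_1,x_2,-x_3)$). Writing $p_3=(r\cos\theta,r\sin\theta,t)$, the function $F(p_3)=d_H(p_1,p_3)^4-d_H(p_2,p_3)^4$ is antisymmetric under the isometry $(x_1,x_2,x_3)\mapsto(x_1,-x_2,-x_3)$ that interchanges $p_1$ and $p_2$; this antisymmetry causes the $\sqrt{3}/2$-terms in the expansion to factor out and the equation $F=0$ collapses to the linear relation $t=-\tan\theta\,(r^2+x_0^2+\tfrac34)$, namely \eqref{t-equation}. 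Substituting this expression for $t$ into $d_H(p_1,p_3)^4 = d_H(p_1,p_2)^4 = 9 + 12x_0^2$ and grouping terms via the identity $(r\cos\theta+x_0)^2+r^2\sin^2\theta=|x_0+re^{\bi\theta}|^2$ produces precisely \eqref{xy-equation}. The hard part is this final algebraic reduction: the symmetric normalization is chosen so that cross terms disappear and the coupled quartic system collapses to the clean pair \eqref{t-equation}, \eqref{xy-equation}; one must also verify that the three canonical families together cover all equilateral triples and do not overlap, which follows from the similarity-invariance of the trichotomy.
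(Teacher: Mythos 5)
Your proposal follows essentially the same route as the paper's proof: reduce a pair of vertices by a similarity of $(\H,d_H)$ to one of three normal forms (vertical pair, horizontal pair, generic pair), then solve the two distance equations for the third vertex. The paper carries out explicitly only the normalization of a generic pair to $u=(-x_0,\frac{\sqrt3}{2},0)$, $v=(-x_0,-\frac{\sqrt3}{2},0)$ and declares the remaining work ``an extensive but elementary calculation''; you actually supply that calculation, and it checks out --- the antisymmetric difference $d_H(u,p_3)^4-d_H(v,p_3)^4=0$ does collapse to the linear relation \eqref{t-equation}, and substituting back into $d_H(u,p_3)^4=9+12x_0^2$ does reduce, via $|x_0+re^{\bi\theta}|^2$, to \eqref{xy-equation}. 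One point to flag in case (i): your (correct) equation $(u^2+v^2)^2=3$ yields the third vertex $(3^{1/4},0,0)$, whereas the proposition displays $((3/4)^{1/4},0,0)$; the displayed point satisfies $d_H((0,0,\pm1),\cdot)^4=7/4\neq 4$, so the statement appears to contain a typo, and your appeal to ``a final dilation'' to reconcile the two cannot work since any nontrivial dilation moves $(0,0,\pm1)$ --- trust your computation here rather than the displayed triple.
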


\begin{proof}
The fact that the sets in parts (i) and (ii) are equilateral is verified by a direct computation which we omit. Note that any two points of $\H$ contained in a vertical line can be mapped by a similarity of $\H$ onto the points $(0,0,1)$ and $(0,0,-1)$. Similarly, any two points of $\H$ contained in a horizontal line can be mapped by a similarity onto the points $(1,0,0)$ and $(-1,0,0)$.

To finish the proof, we first confirm that any two points of $\H$ which do not both lie on a horizontal line or both lie on a vertical line can be mapped by a similarity of $\H$ onto the points $u=(-x_0,\frac{\sqrt3}2,0)$ and $v=(-x_0,-\frac{\sqrt3}2,0)$ for some $x_0>0$.

By applying a left translation we may assume that one of the two points is the origin $e=(0,0,0)$, while the other is of the form $y=(y_1,y_2,y_3)$ with both $y_1^2+y_2^2>0$ and $y_3 \ne 0$. We show that there exists $x_0>0$ and a similarity of $\H$ which maps $u$ and $v$ onto $e$ and $y$. First, left translate $u$ and $v$ by the inverse of $u$. This sends $v$ to $u^{-1}\cdot v = (0,-\sqrt3,2\sqrt3x_0)$. Apply a suitable rotation about the vertical axis and dilate by $\rho/\sqrt3>0$ to send the latter point to
$$
(\rho\cos\varphi,\rho\sin\varphi,(\tfrac2{\sqrt3})\rho^2x_0).
$$
We seek a solution in the variables $\rho$, $\varphi$ and $x_0$ to the equations $y_1 = \rho\cos\varphi$, $y_2 = \rho\sin\varphi$ and $y_3 = (\tfrac2{\sqrt3})\rho^2x_0$. Since $y_1^2+y_2^2>0$ we may choose $\rho=\sqrt{y_1^2+y_2^2}>0$ and $\varphi$ to satisfy the first two equations. Then selecting
$$
x_0 = \frac{\sqrt3}2 \frac{y_3}{\rho^2} = \frac{\sqrt3}2 \frac{y_3}{y_1^2+y_2^2}
$$
finishes this step of the proof.

Observe that $d_H(u,v) = R:= (9+12x_0^2)^{1/4}$. It now suffices to find all points $x=(x_1,x_2,x_3) \in \H$ such that $d_H(u,x)=d_H(v,x)=R$. An extensive but elementary calculation shows that all such points can be expressed in the form shown in the statement of the proposition. This completes the proof.
\end{proof}

\begin{remark}
We chose the normal form $u=(-x_0,\frac{\sqrt3}2,0)$ and $v=(-x_0,-\frac{\sqrt3}2,0)$ deliberately. When $x_0=\tfrac12$ we obtain two cube roots of unity $-\tfrac12\pm\bi\frac{\sqrt3}2$. In this case the choice $r=1$, $\theta=0$ and $t=0$ is allowed in equations \eqref{xy-equation} and \eqref{t-equation}. Indeed, the vertices of the standard equilateral triangle in the horizontal plane $\{x_3=0\}$ of $\H$ remains an equilateral set in $(\H,d_H)$, since this set is homogeneous in the sense of Kirchheim and Preiss.
\end{remark}

A characterization of four point equilateral sets is likely tractable, however, we decline to carry out such an analysis here. We only remark that four point equilateral sets in $\H$ do exist. For instance, consider the set $\{w,x,y,z\} \subset \H$ where $\{x,y,z\}$ denote the vertices of the standard equilateral triangle in the horizontal plane and $w=(0,0,w_3)$ for a suitable choice of $w_3>0$. Note that when $w_3=0$ the common distance from $w$ to any of the points $x$, $y$ or $z$ is strictly smaller than the common mutual distance between $x$, $y$ and $z$, while when $w_3 \to +\infty$ the common distance from $w$ to $x$, $y$ and $z$ also tends to $+\infty$. By continuity, there exists a choice of $w_3>0$ so that $\{w,x,y,z\}$ is equilateral.

\begin{question}
Are there any equilateral five point subsets of $(\H,d_H)$?
\end{question}

Infinite discrete subsets of $(\H,d_H)$ on which counting measure is uniformly distributed include the integer points in a horizontal or vertical line, or any similarity image of such a set. We do not know whether there are any other examples.

We turn to measures supported on curves. Of course, length measure along a horizontal line is a $1$-uniform measure. The following proposition gives additional examples which are supported on nonhorizontal curves, either bounded or unbounded. We conjecture that there are no uniformly distributed measures in the first Heisenberg group which are supported on bounded horizontal curves.

\begin{proposition}
The restriction of $\cS^2_H$ to each of the following sets $A$, or its image under a similarity of $\H$, is uniformly distributed.
\begin{itemize}
\item[(i)] The unit circle $A = \Sph^1 \times (0)$.
\item[(ii)] For each $a<b$, the set $A = \Sph^1 \times \{a,b\}$.
\item[(iii)] The set $A$ consisting of the vertical lines passing through the vertices of a regular polygon lying on the circle $\Sph^1 \times \{0\}$, or the vertical lines passing through the vertices of two regular $m$-gons lying on $\Sph^1 \times \{0\}$.
\end{itemize}
\end{proposition}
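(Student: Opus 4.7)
The plan is to apply the homogeneity criterion recalled just before the proposition (following \cite[Remark 2.5]{kp:uniformly-distributed}): a locally finite Borel measure $\mu$ on $\H$ is uniformly distributed whenever some group of isometries of $(\H,d_H)$ fixes $\supp(\mu)$ setwise, acts transitively on $\supp(\mu)$, and leaves $\mu$ invariant. Each of the three sets $A$ is a finite union of either smooth nonhorizontal curves or translates of the $x_3$-axis. By Proposition~\ref{manifold} (nonhorizontal curves) and Proposition~\ref{homogeneous-subgroups} (vertical lines), each component has sub-Riemannian Hausdorff dimension $2$, so $\cS^2_H\restrict A$ is a positive, locally finite Borel measure whose support is exactly $A$. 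Since isometries of $(\H,d_H)$ preserve $\cS^2_H$, it suffices to exhibit in each case a transitive group of isometries that leaves $A$ invariant.

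For (i), the one-parameter family of rotations $R_\alpha$ about the $x_3$-axis, given by $R_\alpha(x_1,x_2,x_3)=(\cos\alpha\,x_1-\sin\alpha\,x_2,\,\sin\alpha\,x_1+\cos\alpha\,x_2,\,x_3)$, consists of isometries of $(\H,d_H)$ (being group automorphisms that preserve the Kor\'anyi norm) and acts transitively on $\Sph^1\times\{0\}$. For (ii), the same rotations $R_\alpha$ act transitively on each component circle; to interchange the two circles I use $\Psi:=\tau_{(0,0,a+b)}\circ\rho$, where $\rho(x_1,x_2,x_3)=(x_1,-x_2,-x_3)$ is the horizontal reflection (itself the composition of the paper's reflection generator with the rotation $R_\pi$, hence an isometry). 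A direct computation in the group law yields $\Psi(x_1,x_2,a)=(x_1,-x_2,b)$ and $\Psi(x_1,x_2,b)=(x_1,-x_2,a)$, so $\Psi$ swaps the two components, and the group generated by $\{R_\alpha\}$ and $\Psi$ acts transitively on $A$. For (iii), with $A$ the union of vertical lines through a single regular $m$-gon in $\Sph^1\times\{0\}$, the vertical translations $\tau_{(0,0,t)}$ act transitively on each vertical line, while $R_{2\pi/m}$ cyclically permutes the lines; together they generate a transitive isometry group on $A$.

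To handle (iii) in the two-$m$-gon case with angular offset $\phi$, I adjoin the auxiliary map $T_\phi(x_1,x_2,x_3)=(\cos\phi\,x_1+\sin\phi\,x_2,\,\sin\phi\,x_1-\cos\phi\,x_2,\,-x_3)$, which equals the conjugate $R_{\phi/2}\circ\rho\circ R_{-\phi/2}$ and is therefore an isometry of $(\H,d_H)$. On the horizontal plane $T_\phi$ sends $e^{\bi\theta}$ to $e^{\bi(\phi-\theta)}$, interchanging the two vertex sets $\{e^{\bi 2\pi k/m}\}$ and $\{e^{\bi(\phi+2\pi k/m)}\}$; hence $T_\phi$ interchanges the two families of vertical lines. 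Adjoining $T_\phi$ to the previous group yields a transitive isometry group leaving $A$ invariant, and the homogeneity criterion delivers uniform distribution in all three cases. The only genuine computational content is verifying that $\rho$, $\Psi$ and $T_\phi$ are indeed Kor\'anyi isometries, which reduces to the invariance of the Kor\'anyi norm under rotations and under the sign change $x_3\mapsto -x_3$; I anticipate no serious obstacle here, the main subtlety being to exhibit---in the generic-offset two-$m$-gon subcase, where no single rotation of $\R^2$ maps one polygon to the other---an isometry of $\H$ that effects this swap, which is precisely the role of the conjugate $T_\phi$.
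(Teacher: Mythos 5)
Your argument is correct and is precisely the one the paper intends: as with the preceding counting-measure proposition, the proof is an application of the Kirchheim--Preiss homogeneity criterion, and your explicit isometries (the rotations $R_\alpha$; the swap $\tau_{(0,0,a+b)}\circ\rho$ in case (ii); vertical translations together with $R_{2\pi/m}$ and the conjugated reflection $R_{\phi/2}\circ\rho\circ R_{-\phi/2}$ in case (iii)) do act transitively on the respective supports while preserving $\cS^2_H\restrict A$. I see no gaps; the verification that the circles are nonhorizontal, so that $\cS^2_H\restrict A$ is a nontrivial Radon measure, is correctly handled via Propositions \ref{manifold} and \ref{homogeneous-subgroups}.
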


Finally, we discuss measures supported on surfaces. The volume measure $\cS^3_H$ restricted to a vertical hyperplane is $3$-uniform. We give an additional example.

\begin{proposition}
The restriction of $\cS^3_{H}$ to the right circular cylinder
$$
A = \{(x_1,x_2,x_3) \in \H \, | \, x_1^2+x_2^2=1\},
$$
or its image under any similarity of $\H$, is uniformly distributed.
\end{proposition}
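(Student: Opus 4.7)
The plan is to invoke the homogeneity criterion recorded earlier in this subsection: any locally finite Borel measure on $\H$ that is invariant under a group of isometries of $(\H,d_H)$ acting transitively on its support is uniformly distributed. First I would observe that $A$ is a smooth nonhorizontal $2$-dimensional submanifold of $\H$, since $\partial_{x_3}$ lies in every tangent plane to $A$ but not in any horizontal fiber $H_x\H$. Consequently Proposition~\ref{manifold} yields $\dim_H A = 3$ and shows that $\cS^3_H\restrict A$ is a nontrivial locally finite Radon measure whose support is exactly $A$.

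Next I would identify two families of isometries of $(\H,d_H)$ that preserve $A$. The first is the family of rotations $R_\theta(x_1,x_2,x_3)=(x_1\cos\theta-x_2\sin\theta,\,x_1\sin\theta+x_2\cos\theta,\,x_3)$ about the vertical axis, listed among the generators of the isometry group in Section~\ref{subsec:unif-distributed}. The second is the family of vertical left translations $\tau_{(0,0,s)}$, which are isometries by left-invariance of $d_H$; since the symplectic form vanishes whenever one of its arguments is zero, these act simply as $(x_1,x_2,x_3)\mapsto(x_1,x_2,x_3+s)$. Both families manifestly carry $A$ into $A$, and together their compositions act transitively on $A$: given $(\cos\alpha,\sin\alpha,t_1)$ and $(\cos\beta,\sin\beta,t_2)$ in $A$, the composition $\tau_{(0,0,t_2-t_1)}\circ R_{\beta-\alpha}$ sends the first to the second.

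Finally, every isometry of $(\H,d_H)$ preserves the spherical Hausdorff measure $\cS^3_H$, and each of the maps described also maps $A$ to $A$, so each one preserves $\cS^3_H\restrict A$. The homogeneity criterion then gives that $\cS^3_H\restrict A$ is uniformly distributed. I do not foresee any serious obstacle; the only computation requiring a moment's care is the verification that vertical left translation acts as the Euclidean shift in the third coordinate, which is immediate from the Heisenberg group law together with the vanishing of the symplectic form on any pair one of whose entries is zero.
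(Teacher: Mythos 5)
Your proposal is correct and follows exactly the route the paper intends: the proposition is one of the examples in Section \ref{subsec:unif-distributed} whose proof is the homogeneity criterion stated at the start of that subsection, applied to the rotations about the vertical axis and the vertical left translations, which preserve $\cS^3_H\restrict A$ and act transitively on the cylinder. Your verification that vertical left translation is the Euclidean shift in $x_3$ and your appeal to Proposition \ref{manifold} for local finiteness are both sound.
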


\begin{question}\label{Q0}
Are there other examples of uniformly distributed measures in $\H$ supported on $C^1$ surfaces?
For instance, are any of the following sets in $\H$ the support of a uniformly distributed measure?
\begin{itemize}
\item[(i)] The Kor\'anyi unit sphere $\{x \in \hn : |x|_H = 1 \}$.
\item[(ii)] Pansu's bubble set $\cB$ (see \cite{cdpt:survey}).
\item[(iii)] Closed horizontal lifts of the figure $8$ curve $\{(x_1,x_2) \in \R^2 \, | \, (x_1\pm 1)^2+x_2^2 = 1 \}$.
\end{itemize}
\end{question}

We anticipate that the answer to part (i) of the preceding question is no. The rationale for parts (ii) and (iii) comes from the fact that Euclidean spheres, which are examples of supports of Euclidean uniformly distributed measures in all dimensions, are surfaces of constant mean curvature. As is well known, Pansu's bubble set $\cB$ is a surface of constant {\it horizontal mean curvature} while the curves in part (iii) also have constant horizontal curvature.

\begin{question}\label{Q1}
Are there any uniformly distributed measures in $(\H,d_H)$ with compact and infinite support whose support is intrinsically rectifiable in the sense of \cite{mssc:intrinsic-rectifiability}?
\end{question}

In this paper we have focused exclusively on the Kor\'anyi metric $d_H$ on $\hn$. Our rationale for this choice was described in the introduction. However, the following interesting question remains.

\begin{question}\label{Q3}
Let $d_{cc}$ denote the Carnot--Carath\'eodory metric on $\hn$. What can be said about uniform or uniformly distributed measures in $(\hn,d_{cc})$?
\end{question}

\bibstyle{acm}
\bibliography{marstrand}
\end{document}